\documentclass[11pt,reqno]{amsart}

\usepackage{amsmath,amssymb,amsfonts,amscd,amsthm}
\usepackage{mathrsfs,mathabx}
\usepackage{latexsym,graphicx,cite,cases,array,booktabs}

\setlength{\topmargin}{-1.5cm}
\setlength{\oddsidemargin}{0.0cm}
\setlength{\evensidemargin}{0.0cm}
\setlength{\textwidth}{16.7cm}
\setlength{\textheight}{23cm}
\headheight 20pt
\headsep    26pt
\footskip 0.4in

\newtheorem{theorem}{Theorem}[section]

\newtheorem{lemma}[theorem]{Lemma}

\newtheorem{remark}[theorem]{Remark}

\numberwithin{equation}{section}

\title[an inverse source problem]{Stability for an inverse source problem of the damped biharmonic plate equation}

\author[P. Li]{Peijun Li}
\address{Department of Mathematics, Purdue University, West Lafayette, Indiana
47907, USA}
\email{lipeijun@math.purdue.edu}

\author[X. Yao]{Xiaohua Yao}
\address{School of Mathematics and Statistics, China Central Normal University,
Wuhan, Hubei, China}
\email{yaoxiaohua@mail.ccnu.edu.cn}

\author[Y. Zhao]{Yue Zhao}
\address{School of Mathematics and Statistics, China Central Normal University,
Wuhan, Hubei, China}
\email{zhaoyueccnu@163.com}


\subjclass[2000]{35R30, 31B30}

\begin{document}

\begin{abstract}
This paper is concerned with the stability of the inverse source problem for the damped biharmonic plate equation in three dimensions. The stability estimate consists of the Lipschitz type data discrepancy and the high frequency tail of the source function, where the latter decreases as the upper bound of the frequency increases. The stability also shows exponential dependence on the constant damping coefficient. The analysis employs Carleman estimates and time decay estimates for the damped plate wave equation to obtain an exact observability bound and depends on the study of the resonance-free region and an upper bound of the resolvent of the biharmonic operator with respect to the complex wavenumber.
\end{abstract}

\keywords{inverse source problem, the biharmonic operator, the damped biharmonic plate equation, stability}

\maketitle

\section{Introduction}

Consider the damped biharmonic plate equation in three dimensions
\begin{align}\label{eqn}
\Delta^2u(x, k) - k^2 u(x, k) - {\rm i}k\sigma u(x, k) = f(x), \quad x\in\mathbb R^3,
\end{align}
where $k>0$ is the wavenumber, $\sigma>0$ is the damping coefficient, and $f\in L^2(\mathbb R^3)$ is a assumed to be a real-valued function with a compact support contained in $B_R=\{x\in\mathbb R^3: |x|<R\}$, where $R>0$ is a constant. Let $\partial B_R$ be the boundary of $B_R$. Since the problem is formulated in the open domain, the Sommerfeld radiation condition is imposed usually on $u$ and $\Delta u$ to ensure the well-posedness of the problem \cite{TS}. This paper is concerned with the inverse source problem of determining $f$ from the boundary measurements 
\begin{align*}
u(x, k), \, \nabla u(x, k), \, \Delta u(x, k), \, \nabla \Delta u(x, k), \quad x\in\partial B_R
\end{align*}
corresponding to the wavenumber $k$ given in a finite interval.

In general, there is no uniqueness for the inverse source problems of the wave equations at a fixed frequency \cite{BLT, LYZ}. Computationally, a more serious issue is the lack of stability, i.e., a small variation of the data might lead to a huge error in the reconstruction. Hence it is crucial to examine the stability of the inverse source problems. In \cite{BLT}, the authors initialized the study of the inverse source problem for the Helmholtz equation by using multi-frequency data. Since then, it has become an active research topic on the inverse source problems via multiple frequency data in order to overcome the non-uniqueness issue and enhance the stability. The increasing stability was investigated for the inverse source problems of various wave equations which include the acoustic, elastic, and electromagnetic wave equations \cite{BLZ, CIL, EI-18, EI-20, LY, LZZ} and the Helmholtz equation with attenuation \cite{IL}. On the other hand, it has generated sustained interest in the mathematics community on the boundary value problems for higher-order elliptic operators \cite{GGS}. The biharmonic operator, which can be encountered in models originating from elasticity for example, appears as a natural candidate for such a study \cite{RR, MMM}. Compared with the equations involving the second order differential operators, the model equations with the biharmonic operators are much less studied in the community of inverse problems. We refer to \cite{AP, Iw, KLU, LKU, TS} and the references cited therein on the recovery of the lower-order coefficients by using either the far-field pattern or the Dirichlet-to-Neumann map on the boundary. In a recent paper \cite{LYZ}, the authors demonstrated the increasing stability for the inverse source problem of the biharmonic operator with a zeroth order perturbation by using multi-frequency near-field data. The main ingredient of the analysis relies on the study of an eigenvalue problem for the biharmonic operator with the hinged boundary conditions. But the method is not applicable directly to handle the biharmonic operator with a damping coefficient.

Motivated by \cite{CIL, IL}, we use the Fourier transform in time to reduce the inverse source problem into the identification of the initial data for the initial value problem of the damped biharmonic plate wave equation by lateral Cauchy data. The Carleman estimate is utilized to obtain an exact observability bound for the source function in the framework of the initial value problem for the corresponding wave equation, which connects the scattering data and the unknown source function by taking the inverse Fourier transform. An appropriate rate of time decay for the damped plate wave equation is proved in order to justify the Fourier transform. Then applying the results in \cite{LYZ} on the resolvent of the biharmonic operator, we obtain a resonance-free region of the data with respect to the complex wavenumber and the bound of the analytic continuation of the data from the given data to the higher wavenumber data.  By studying the dependence of analytic continuation and of the exact observability bound for the damped plate wave equation on the damping coefficient, we show the exponential dependence of increasing stability on the damping constant. The stability estimate consists of the Lipschitz type of data discrepancy and the high wavenumber tail of the source function. The latter decreases as the wavenumber of the data increases, which implies that the inverse problem is more stable when the higher wavenumber data is used. But the stability deteriorates as the damping constant becomes larger.  It should be pointed out that due to the existence of the damping coefficient, we can not obtain a sectorial resonance-free region for the data as that in \cite{CIL, LY}. Instead, we choose a rectangular resonance-free region as that in \cite{LZZ}, which leads to a double logarithmic type of the high wavenumber tail for the estimate. 

This paper is organized as follows. In section \ref{resolvent analysis}, the direct source problem is discussed; the resolvent is introduced for the elliptic operator, and its resonance-free region and upper bound are obtained. Section \ref{inverse problem} is devoted to the stability analysis of the inverse source problem by using multi-frequency data. In appendix \ref{exact}, we use the Carleman estimate to derive an exact observability bound with exponential dependence on the damping coefficient. In appendix \ref{time decay}, we prove an appropriate rate of time decay for the damped plate wave equation to justify the Fourier transform.

\section{The direct source problem}\label{resolvent analysis}

In this section, we discuss the solution of the direct source problem and study the resolvent of the biharmonic operator with a damping coefficient. 

\begin{theorem}
Let $f\in L^2(\mathbb R^3)$ with a compact support. Then there exists a unique solution $u$ of Schwartz distribution to \eqref{eqn} for every $k>0$. Moreover, the solution satisfies
\[
|u(x, k)|\leq C(k, f)e^{-c(k, \sigma)|x|}
\]
as $|x|\to\infty,$ where $C(k, f)$ and $c(k, \sigma)$ are positive constants depending on $k, f$ and $k, \sigma$, respectively. 
\end{theorem}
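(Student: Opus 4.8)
The plan is to exploit the decisive feature that the damping term makes the Fourier symbol of the operator non-vanishing on the real frequency space, which disposes of existence and uniqueness at once, and then to construct the convolution kernel explicitly in order to read off the decay rate.

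Taking the Fourier transform of \eqref{eqn} yields $m(\xi)\hat u(\xi)=\hat f(\xi)$ with symbol $m(\xi)=|\xi|^4-k^2-{\rm i}k\sigma$. Because $k,\sigma>0$, the imaginary part $-k\sigma$ is constant and nonzero, so $m(\xi)\neq 0$ for every $\xi\in\R^3$; in fact $|m(\xi)|\geq k\sigma>0$ while $|m(\xi)|\sim|\xi|^4$ at infinity. For uniqueness, any $u\in\mcS'$ solving the homogeneous equation satisfies $m\hat u=0$, and since $\{m=0\}=\emptyset$ this forces $\hat u=0$, hence $u=0$. For existence, $f\in L^2$ with compact support gives $\hat f\in L^2$ (indeed entire, by Paley--Wiener), and $1/m$ is bounded, so $\hat u=\hat f/m\in L^2$ defines $u=\mathcal F^{-1}[\hat f/m]\in L^2\subset\mcS'$ solving \eqref{eqn}. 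Thus the damping is exactly what supplies a non-vanishing, elliptic symbol and makes the radiation condition unnecessary for well-posedness.

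To obtain the pointwise bound I would compute the kernel $G=\mathcal F^{-1}[1/m]$. Factor $m(\xi)=(|\xi|^2-\kappa)(|\xi|^2+\kappa)$ with $\kappa=(k^2+{\rm i}k\sigma)^{1/2}$ on the principal branch, so that $\operatorname{Re}\kappa,\operatorname{Im}\kappa>0$, and use the partial fraction $1/m=(2\kappa)^{-1}[(|\xi|^2-\kappa)^{-1}-(|\xi|^2+\kappa)^{-1}]$. Setting $a=\kappa^{1/2}=(k^2+{\rm i}k\sigma)^{1/4}$, whose argument lies in $(0,\pi/8)$, the decaying three-dimensional Yukawa/Helmholtz fundamental solutions give $\mathcal F^{-1}[(|\xi|^2+\kappa)^{-1}]=e^{-a|x|}/(4\pi|x|)$ and $\mathcal F^{-1}[(|\xi|^2-\kappa)^{-1}]=e^{{\rm i}a|x|}/(4\pi|x|)$ (the branch with positive real part in each exponent; equivalently obtained by residues, closing the radial contour in the upper half-plane and collecting the poles $s=a$ and $s={\rm i}a$). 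Combining,
\[
G(x)=\frac{e^{{\rm i}a|x|}-e^{-a|x|}}{8\pi\kappa|x|}.
\]
The apparent singularity at the origin cancels, so $G$ is bounded near $0$, and since it decays exponentially we get $G\in L^1(\R^3)$ with $\hat G=1/m$, confirming $u=G*f$. As $\arg a\in(0,\pi/8)$ one has $0<\operatorname{Im}a<\operatorname{Re}a$, so $e^{-a|x|}$ decays like $e^{-(\operatorname{Re}a)|x|}$ while the dominant term $e^{{\rm i}a|x|}$ decays like $e^{-(\operatorname{Im}a)|x|}$; hence $|G(x)|\lesssim|x|^{-1}e^{-c(k,\sigma)|x|}$ with $c(k,\sigma)=\operatorname{Im}\big((k^2+{\rm i}k\sigma)^{1/4}\big)>0$. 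Finally, writing $u=G*f$ with $\operatorname{supp}f\subset B_R$, for $|x|>2R$ and $y\in B_R$ one has $|x-y|\geq|x|-R\geq|x|/2$, so $|G(x-y)|\lesssim|x|^{-1}e^{-c(|x|-R)}$, and integrating against $|f|$ with $\|f\|_{L^1(B_R)}\leq|B_R|^{1/2}\|f\|_{L^2}$ yields $|u(x)|\leq C(k,f)\,e^{-c(k,\sigma)|x|}$.

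The conceptual core, namely existence and uniqueness, is immediate from the ellipticity of the damped symbol, so the genuine work lies in the explicit inversion of $1/m$. The main obstacle I anticipate is the careful branch selection in the partial-fraction kernels (equivalently, identifying precisely which of the four roots $s\in\{\pm a,\pm{\rm i}a\}$ lie in the upper half-plane during the contour integration) so as to recover the decaying kernel consistent with the $L^2$ solution, together with verifying the cancellation of the $|x|^{-1}$ singularity at the origin that secures $G\in L^1$ and the identity $u=G*f$. Once the kernel is pinned down, extracting the dominant rate $c(k,\sigma)=\operatorname{Im}\big((k^2+{\rm i}k\sigma)^{1/4}\big)$ and transferring the decay to $u$ through the compact support of $f$ is routine.
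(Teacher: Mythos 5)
Your proposal is correct and follows essentially the same route as the paper: Fourier transform of the equation, uniqueness from the nowhere-vanishing symbol $|\xi|^4-k^2-{\rm i}k\sigma$, and the explicit partial-fraction kernel $\bigl(e^{{\rm i}a|x|}-e^{-a|x|}\bigr)/(8\pi a^2|x|)$ with $a=(k^2+{\rm i}k\sigma)^{1/4}$, whose exponential decay transfers to $u$ via the compact support of $f$. Your write-up merely supplies more detail than the paper's (branch selection, $G\in L^1$, the $|x|>2R$ estimate), with the only difference being notational: the paper's $\kappa$ is your $a$.
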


\begin{proof}
Taking the Fourier transform of $u(x, k)$ formally with respect to the spatial variable $x$, we define
\begin{align*}
u^*(x, k) = \int_{\mathbb R^3} e^{{\rm i}x\cdot\xi} \frac{1}{|\xi|^4 - k^2 - {\rm i}k\sigma} \hat{f}(\xi) {\rm d}\xi, \quad x\in\mathbb R^3,
\end{align*}
where 
\[
\hat{f}(\xi) = \frac{1}{(2\pi)^3} \int_{\mathbb R^3} f(x) e^{-{\rm i}x\cdot\xi} {\rm d}x.
\]
It follows from the Plancherel theorem that for each $k>0$ we have that $u^*(\cdot, k)\in H^4(\mathbb R^3)$ and satisfies the equation \eqref{eqn} in the sense of Schwartz distribution. 

Denote 
\[
G(x, k) =  \int_{\mathbb R^3} e^{{\rm i}x\cdot\xi} \frac{1}{|\xi|^4 - k^2 - {\rm i}k\sigma}{\rm d}\xi.
\]
By a direct calculation we can write $u^*(x, k)$ as
\begin{align}\label{u^*}
u^*(x, k) = (G*f)(x) = \frac{1}{2\kappa^2}\int_{\mathbb R^3} \Big(\frac{e^{{\rm i}\kappa |x - y|}}{4\pi|x - y|} - \frac{e^{-\kappa |x - y|}}{4\pi|x - y|}\Big) f(y){\rm d}y,
\end{align}
where $\kappa = (k^2 + {\rm i}k\sigma)^{\frac{1}{4}}$ such that $\Re \kappa>0$ and $\Im \kappa>0$. Since $f$ has a compact support, we obtain from \eqref{u^*} that the solution $u^*(x, k)$ satisfies the estimate
\[
|u^*(x, k)|\leq C(k, f)e^{-c(k, \sigma)|x|}
\]
as $|x|\to\infty$, where $C(k, f)$ and $c(k, \sigma)$ are positive constants depending on $k, f$ and $k, \sigma$, respectively. By direct calculations, we may also show that $\nabla u^*$ and $\Delta u^*$ have similar exponential decay estimates.  

Next is show the uniqueness. Let $\tilde{u}^*(x, k)$ be another Schwartz distributional solution to \eqref{eqn}. Clearly we have
\begin{align*}
(\Delta^2 - k^2 - {\rm i}k\sigma) (u^* - \tilde{u}^*) = 0.
\end{align*}
Taking the Fourier transform on both sides of the above equation yields
\[
(|\xi|^4 - k^2 - {\rm i}k\sigma) ( \widehat{u^* - \tilde{u}^*}) (\xi) = 0.
\]
Notice that for $k>0$ we have $|\xi|^4 - k^2 - {\rm i}k\sigma\neq 0$ for all $\xi\in\mathbb R^3$. Taking the generalized inverse Fourier transform gives $u^* - \tilde{u}^* = 0$, which proves the uniqueness.
\end{proof}

To study the resolvent we let 
\[
u^*(x, \kappa) := u(x, k), \quad \kappa = (k^2 + {\rm i}k\sigma)^{\frac{1}{4}},
\]
where $\Re \kappa>0$ and $\Im\kappa>0$. By \eqref{eqn}, $u^*$ satisfies 
\begin{align*}
\Delta^2 u^* - \kappa^4 u^*= f.
\end{align*}

Denote by $\mathcal{R} = \{z\in\mathbb C: (\delta, +\infty)\times (-d, d)\}$ the infinite rectangular slab, where $\delta$ is any positive constant and $d\ll 1$. For $k\in\mathcal{R}$, denote the resolvent 
\[
R(k) := (\Delta^2 - k^2 - {\rm i}k\sigma)^{-1}.
\]
Then we have $R(\kappa) = (\Delta^2 - \kappa^4)^{-1}$. Hereafter, the notation $a\lesssim b$ stands for $a\leq Cb,$ where $C>0$ is a generic constant which may change step by step in the proofs.

\begin{lemma}\label{direct}
For each $k\in\mathcal{R}$ and $\rho\in C_0^\infty(B_R)$ the resolvent operator $R(k)$ is analytic and has the following estimate:
\begin{align*}
\|\rho R(k) \rho\|_{L^2(B_R)\rightarrow H^j(B_R)} \lesssim |k|^{\frac{j}{2}} e^{2R(\sigma + 1)|k|^{\frac{1}{2}}}, \quad j = 0, 1, 2, 3, 4.
\end{align*}
\end{lemma}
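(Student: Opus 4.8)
The plan is to reduce the biharmonic resolvent to a pair of Helmholtz-type resolvents through the factorization already exploited in the proof of the preceding theorem. Writing $\Delta^2 - \kappa^4 = (-\Delta - \kappa^2)(-\Delta + \kappa^2)$ with $\kappa = (k^2 + {\rm i}k\sigma)^{1/4}$, $\Re\kappa,\Im\kappa>0$, partial fractions give
\[
R(k) = R(\kappa) = \frac{1}{2\kappa^2}\Big[(-\Delta - \kappa^2)^{-1} - (-\Delta + \kappa^2)^{-1}\Big],
\]
so that $\rho R(k)\rho$ is the integral operator with kernel $\rho(x)G(x-y)\rho(y)$, where $G$ is the explicit Green's function from \eqref{u^*}. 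First I would record analyticity: for $k\in\mathcal R$ one checks that $\kappa^4 = k(k+{\rm i}\sigma)$ stays in the open first quadrant, hence $|\xi|^4 - \kappa^4 \neq 0$ for all $\xi\in\R^3$ and $k\mapsto(|\xi|^4 - k^2 - {\rm i}k\sigma)^{-1}$ is analytic and locally uniformly bounded; analyticity of $\rho R(k)\rho$ then follows from the explicit kernel.

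For the $L^2\to L^2$ bound ($j=0$) I would estimate the kernel directly. Since $\rho$ is supported in $B_R$, only $|x-y|\le 2R$ contributes, and crudely $|e^{{\rm i}\kappa|x-y|}|, |e^{-\kappa|x-y|}|\le e^{2R|\kappa|}$, whence $|G(x-y)|\lesssim |\kappa|^{-2}e^{2R|\kappa|}|x-y|^{-1}$. A Schur test (equivalently Young's inequality, the kernel being a convolution restricted to $B_{2R}$) yields $\|\rho R(k)\rho\|_{L^2\to L^2}\lesssim |\kappa|^{-2}e^{2R|\kappa|}\int_{B_{2R}}|z|^{-1}{\rm d}z \lesssim |\kappa|^{-2}R^2 e^{2R|\kappa|}$. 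A direct estimate from $|\kappa|^4 = |k|\,|k+{\rm i}\sigma|\le |k|(|k|+\sigma)$ gives $|\kappa|\le(\sigma+1)|k|^{1/2}$ on $\mathcal R$, so the $j=0$ case follows (the $|\kappa|^{-2}$ factor only helps). I emphasize that, because $\Re\kappa,\Im\kappa>0$ on $\mathcal R$, both exponentials actually decay; the factor $e^{2R(\sigma+1)|k|^{1/2}}\ge 1$ is retained as a clean uniform upper bound in the form needed for the later continuation argument.

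For $j=1,2,3,4$ I would avoid differentiating the singular kernel and instead use elliptic regularity together with the equation. Fixing $\tilde\rho\in C_0^\infty(B_R)$ with $\tilde\rho\equiv 1$ on $\operatorname{supp}\rho$, the function $w = R(k)(\rho g)$ solves $\Delta^2 w - \kappa^4 w = \rho g$, so $\Delta^2(\tilde\rho w) = \tilde\rho\rho g + \kappa^4\tilde\rho w + [\Delta^2,\tilde\rho]w$. Interior biharmonic regularity then bounds $\|\rho w\|_{H^4(B_R)}$ by $\|g\|_{L^2} + |\kappa|^4\|\tilde\rho w\|_{L^2}$ plus commutator terms of order $\le 3$, which close after a standard finite bootstrap on nested cutoffs. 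Combined with the $j=0$ bound $\|\tilde\rho w\|_{L^2}\lesssim |\kappa|^{-2}e^{2R|\kappa|}\|g\|$, this gives $\|\rho R(k)\rho\|_{L^2\to H^4}\lesssim |\kappa|^2 e^{2R|\kappa|}\|g\| \lesssim |k|\,e^{2R(\sigma+1)|k|^{1/2}}$, which is dominated by the claimed $|k|^2 e^{2R(\sigma+1)|k|^{1/2}}$. The intermediate cases $j=1,2,3$ follow by interpolating the $H^0$ and $H^4$ bounds, the common exponential factor surviving the interpolation and the intermediate powers of $|\kappa|$ all being dominated by $|k|^{j/2}$.

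The main obstacle is precisely this high-order step: keeping the $\kappa$-dependence explicit and uniform while controlling the commutators $[\Delta^2,\tilde\rho]w$ (whose coefficients involve up to third derivatives of the cutoff), and ensuring the elliptic constants are geometric and independent of $k$. As an alternative to running this argument by hand, one may quote the biharmonic resolvent bounds of \cite{LYZ} directly, with the spectral parameter taken to be $\kappa^4 = k^2 + {\rm i}k\sigma$, and then perform only the conversion $|\kappa|\le(\sigma+1)|k|^{1/2}$ described above.
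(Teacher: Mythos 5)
Your proposal is correct, but it proves the lemma by a genuinely different, self-contained route, whereas the paper's proof is essentially your final remark: the authors simply quote \cite[Theorem 2.1]{LYZ}, which gives $\|\rho R(\kappa)\rho\|_{L^2(B_R)\to H^j(B_R)}\lesssim |\kappa|^{-2}\langle\kappa\rangle^j\big(e^{2R(\Im\kappa)_-}+e^{2R(\Re\kappa)_-}\big)$, observe that for $d$ small the map $k\mapsto\kappa=(k^2+{\rm i}k\sigma)^{1/4}$ is analytic on $\mathcal{R}$ with values in the first quadrant (so analyticity transfers and the two exponentials are harmless, since $(\Re\kappa)_-=(\Im\kappa)_-=0$ there), check that $|\kappa|$ is bounded below on $\mathcal{R}$, and substitute --- exactly your ``conversion'' step. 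Your main argument instead reproves the resolvent bound from scratch: Schur/Young on the explicit kernel for $j=0$, interior elliptic regularity plus the equation for $j=4$, and Sobolev interpolation for $j=1,2,3$. What this buys is independence from the companion paper and a sharper picture (your observation that both exponentials $e^{{\rm i}\kappa r}$, $e^{-\kappa r}$ actually decay on $\mathcal{R}$, so the factor $e^{2R(\sigma+1)|k|^{1/2}}$ is only a crude majorant, is correct and matches what happens silently in the paper's proof); what it costs is that the step you gloss over --- closing the commutator terms $[\Delta^2,\tilde\rho]w$ ``after a standard finite bootstrap'' --- is the only genuinely technical point, and the clean way to discharge it is to invoke the interior elliptic estimate $\|w\|_{H^4(\Omega')}\leq C(\Omega',\Omega)\big(\|\Delta^2 w\|_{L^2(\Omega)}+\|w\|_{L^2(\Omega)}\big)$, $\Omega'\Subset\Omega\subset B_R$, as a citable black box with a $\kappa$-independent constant (the parameter enters only through $\Delta^2w=\kappa^4 w+\rho g$), rather than improvise nested cutoffs; an alternative that avoids commutators altogether is the identity $\Delta w=-\tfrac12(w_++w_-)$ with $w_\pm=(-\Delta\mp\kappa^2)^{-1}(\rho g)$, which reduces all derivative bounds to further Schur-type kernel estimates. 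Minor quibbles: your inequality $|\kappa|\leq(\sigma+1)|k|^{1/2}$ needs $|k|\gtrsim 1$ or a $\delta$-dependent constant (harmless, since $\delta$ is fixed and polynomial factors in $\sigma$ can be absorbed into $e^{2R(\sigma+1)|k|^{1/2}}$ because $|k|^{1/2}\geq\delta^{1/2}$), and your analyticity check should note that $\Im(k^2+{\rm i}k\sigma)\geq\delta(\sigma-2d)>0$ uniformly on $\mathcal{R}$ so that the Fourier multiplier $(|\xi|^4-k^2-{\rm i}k\sigma)^{-1}$ is uniformly bounded, which is what legitimizes differentiating under the integral sign.
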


\begin{proof}
It is clear to note that for a sufficiently small $d$, the set $\{(k^2 + {\rm i}k\sigma)^{\frac{1}{4}}: k\in\mathcal{R}\}$ belongs to the first quadrant. Consequently, $(k^2 + {\rm i}k\sigma)^{\frac{1}{4}}$ is analytic with respect to $k\in\mathcal{R}$. By \cite[Theorem 2.1]{LYZ}, the resolvent $\mathcal{R}(\kappa)$ is analytic in $\mathbb{C}\backslash\{0\}$ and the following estimate holds:
\begin{align}\label{free}
\|\rho R(\kappa) \rho\|_{L^2(B_R)\rightarrow H^j(B_R)}\lesssim |\kappa|^{-2}\langle\kappa\rangle^j (e^{2R (\Im\kappa)_-} + e^{2R (\Re\kappa)_-}), \quad j=0, 1, 2, 3, 4,
\end{align}
where $x_{-}:=\max\{-x,0\}$ and  $\langle\kappa\rangle = (1 + |\kappa|^2)^{1/2}$. On the other hand, letting $k = k_1 + {\rm i}k_2$, we have from a direct calculation that 
\[
k^2 + {\rm i}k\sigma = k_1^2 - k_2^2 - k_2\sigma + (2k_1k_2 + k_1\sigma){\rm i}.
\]
It is easy to see that if $d$ is sufficiently small, which gives that $|k_2|$ is sufficiently small, there is a positive lower bound for $|k^2 + {\rm i}k\sigma|$ with $k\in\mathcal{R}$ and then $|\kappa|>c$ for some positive constant $c$. The proof is completed by replacing $\kappa$ with $(k^2 + {\rm i}k\sigma)^{\frac{1}{4}}$ in \eqref{free}. 
\end{proof}

\section{The inverse source problem}\label{inverse problem}

In this section, we address the inverse source problem of the damped biharmonic plate equation and present an increasing stability estimate by using multi-frequency scattering data. 

Denote
\begin{align*}
\|u(x, k)\|^2_{\partial B_R}&:= \int_{\partial B_R}\Big( ( k^4 + k^2)|u(x, k)|^2 + k^2|\nabla u(x, k)|^2 \\
&\quad+ (k^2 + 1)|\Delta u(x, k)|^2 + |\nabla\Delta u(x, k)|^2\Big) {\rm d}s(x).
\end{align*}
The following lemma provides a relation between the unknown source function and the boundary measurements. Hereafter, by Remark \ref{rd}, we assume that $f\in H^n(B_R)$ where $n\geq 4.$

\begin{lemma}\label{control}
Let $u$ be the solution to the direct scattering problem \eqref{eqn}. Then 
\begin{align*}
\|f\|_{L^2(B_R)}^2&\lesssim 2e^{C\sigma^2} \int_{0}^{+\infty} \|u(x, k)\|^2_{\partial B_R} {\rm d}k.
\end{align*}
\end{lemma}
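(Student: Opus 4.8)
The plan is to reduce the claimed frequency-domain bound to a lateral observability estimate for the time-domain damped plate wave equation and then transfer it back to the frequency variable via Plancherel's theorem. First I would associate to \eqref{eqn} the initial value problem
\[
\partial_t^2 U + \sigma\partial_t U + \Delta^2 U = 0 \ \ \text{in } \mathbb{R}^3\times(0,\infty), \qquad U(\cdot,0)=0,\quad \partial_t U(\cdot,0)=f,
\]
and check, by integrating by parts twice in $t$ and using the two initial conditions, that its temporal Fourier transform $u(x,k)=\int_0^\infty U(x,t)e^{{\rm i}kt}\,{\rm d}t$ solves \eqref{eqn}. Here the decay estimate of Appendix \ref{time decay} is essential: it ensures that $U$, together with the spatial derivatives entering the boundary norm, decays rapidly enough in $t$ that this Fourier transform is well defined and that Plancherel's identity may be applied termwise below.

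Next I would invoke the exact observability bound derived in Appendix \ref{exact} from a Carleman estimate for the operator $\partial_t^2+\Delta^2$. This yields a constant $C$ for which
\[
\|f\|_{L^2(B_R)}^2 \lesssim e^{C\sigma^2}\int_0^\infty\!\!\int_{\partial B_R}\Big(|\partial_t^2 U|^2+|\partial_t U|^2+|\partial_t\nabla U|^2+|\partial_t\Delta U|^2+|\Delta U|^2+|\nabla\Delta U|^2\Big)\,{\rm d}s\,{\rm d}t,
\]
i.e. the lateral Cauchy data of $U$ up to the order dictated by the fourth-order spatial operator. The exponential factor $e^{C\sigma^2}$ is precisely what the Carleman weight produces once the damping term is absorbed, and tracking its dependence on $\sigma$ is the source of the damping dependence advertised in the abstract.

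Finally I would apply Plancherel's theorem in $t$ to each boundary term, using that the temporal Fourier transform turns $\partial_t$ into multiplication by $-{\rm i}k$, so that $|\widehat{\partial_t^\ell U}|^2 = k^{2\ell}|u|^2$. Thus $\int_0^\infty|\partial_t^2 U|^2\,{\rm d}t$ becomes a constant times $\int_0^\infty k^4|u|^2\,{\rm d}k$, $\int_0^\infty|\partial_t U|^2\,{\rm d}t$ becomes $\int_0^\infty k^2|u|^2\,{\rm d}k$, and likewise for the mixed space-time terms; the six terms above then reassemble into exactly the weighted boundary norm $\|u(x,k)\|_{\partial B_R}^2$ with its coefficients $k^4+k^2$, $k^2$, $k^2+1$, and $1$. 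Using that $f$, and hence $U$, is real-valued so that $|u(x,-k)|=|u(x,k)|$ accounts for the restriction of the spectral integral to $k>0$ and for the constant $2$ in the statement. Integrating over $\partial B_R$ and in $k$ then gives the claimed inequality.

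The main obstacle I anticipate is not the algebra of matching weights but the two analytic inputs on which it rests: establishing the Carleman-based observability estimate with the explicit $e^{C\sigma^2}$ dependence on the damping, which requires controlling how the term $\sigma\partial_t U$ is absorbed into the Carleman machinery without degrading the weight, and proving enough temporal decay of $U$ and its relevant derivatives to legitimize the half-line Fourier transform and its interchange with the boundary integration. Both are deferred to the appendices, so that the proof of the lemma proper is essentially the bookkeeping that glues the observability bound to Plancherel's theorem.
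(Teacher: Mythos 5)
Your proposal is correct and takes essentially the same approach as the paper: reduce to the damped plate wave IVP, invoke the Carleman-based exact observability bound of Appendix \ref{exact}, justify the temporal Fourier transform via the decay estimates of Appendix \ref{time decay}, convert the lateral Cauchy data through Plancherel's theorem, and use real-valuedness of $f$ to restrict to $k>0$ with the factor $2$. The one point the paper makes explicit that you pass over quickly is the identification step: showing that $\int_0^\infty U(x,t)e^{{\rm i}kt}\,{\rm d}t$ is \emph{a} distributional solution of \eqref{eqn} is not enough by itself --- one must then invoke the uniqueness of the direct problem (Theorem 2.1) to conclude that this transform coincides with the solution $u$ appearing in the lemma.
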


\begin{proof}
Consider the initial value problem for the damped biharmonic plate wave equation 
\begin{align}\label{Kirchhoff}
\begin{cases}
\partial_t^2 U(x, t) + \Delta^2 U(x, t) + \sigma\partial_t U(x, t) = 0, &\quad (x, t)\in B_R\times (0, +\infty),\\
U(x, 0) = 0, \quad \partial_tU(x, 0) = f(x), &\quad x\in B_R.
\end{cases}
\end{align}
We define $U(x, t) = 0$ when $t<0$  and denote $U_T(x, t) = U(x, t)\chi_{[0, T]}(t)$ and 
 \[
 \widehat{U_T} (x, k) = \int_0^T U(x, t) e^{{\rm i}kt} {\rm d}t.
 \] 
 By the decay estimate \eqref{de} we have that $U(x, t)\in L_t^2(0, +\infty)$ and $\lim_{T\rightarrow\infty}U_T(x, t) = U(x, t)$ in $L^2_t(\mathbb R)$ uniformly for all $x\in\mathbb R^3$. It follows from the Plancherel Theorem that $\widehat{U_T}$ also converges  in $L^2_k(\mathbb R)$ to a function $u_*(x, k)\in L^2_k(\mathbb R)$ uniformly for all $x\in\mathbb R^3$, which implies that $u_*(x, k)$ is the Fourier transform of $U(x, t)$.
  
Denote by $\langle \cdot, \cdot \rangle$ and $\mathcal{S}$ the usual scalar inner product of $L^2(\mathbb R^3)$ and the space of Schwartz functions, respectively. We take $u_*(x, k)$ as a Schwartz distribution such that $u_*(x, k)(\varphi) = \langle u_*, \varphi \rangle$ for each $\varphi\in\mathcal{S}$. In what follows, we show that $u_* (x, k)$ satisfies the equation \eqref{eqn} in the sense of Schwartz distribution.  

First we multiply both sides of the wave equation \eqref{Kirchhoff} by a Schwartz function $\varphi$ and take integration over $\mathbb R^3$. Using the wave equation \eqref{Kirchhoff} and the integration by parts with respect to the $t$ variable over $[0, T]$ for $T>0$, we obtain
\begin{align}\label{identity}
0 &= \int_0^{T} \langle \partial_t^2 U + \Delta^2 U + \sigma\partial_t U, \varphi \rangle e^{{\rm i}kt} {\rm d}t\notag\\
&= e^{{\rm i}kT}\langle \partial_t U(x, T), \varphi \rangle - {\rm i}k e^{{\rm i}kT}\langle  U(x, T), \varphi \rangle + \sigma e^{{\rm i}kT}\langle U(x, T),\varphi \rangle\notag\\
&\quad - \langle \partial_t U(x, 0), \varphi \rangle +\Big\langle\int_0^T (\Delta^2 U - k^2 U - {\rm i}k\sigma U)  e^{{\rm i}kt} {\rm d}t, \varphi \Big\rangle.
\end{align} 
It follows from the decay estimate \eqref{de} that $|\partial_t U(x, t)|, |U(x, t)|\lesssim {(1 + t)^{-\frac{3}{4}}}$ uniformly for all $x\in\mathbb R^3$, which give 
\[
\lim_{T\rightarrow \infty} e^{{\rm i}kT}\langle \partial_t U(x, T), \varphi \rangle = \lim_{T\rightarrow \infty} {\rm i}k e^{{\rm i}kT}\langle  U(x, T), \varphi \rangle = \lim_{T\rightarrow \infty}  \sigma e^{{\rm i}kT}\langle U(x, T),\varphi \rangle= 0.
\]
On the other hand, we have from the integration by parts that 
\begin{align}\label{identity_2}
&\Big\langle\int_0^T (\Delta^2 U - k^2 U - {\rm i}k\sigma U)  e^{{\rm i}kt} {\rm d}t, \varphi \Big\rangle\notag\\
&=  \Big\langle\int_0^T U {\rm d}t, \Delta^2\varphi\Big\rangle + \Big\langle\int_0^T (- k^2 U - {\rm i}k\sigma U)  e^{{\rm i}kt} {\rm d}t, \varphi \Big\rangle.
\end{align}
Since $\lim_{T\rightarrow+\infty} \widehat{U_{T}} (x, k) = u_*(x, k)$ in $L^2_k(\mathbb R)$ uniformly for $x\in\mathbb R^3$, we can choose a positive sequence $\{T_n\}_{n=1}^\infty$ such that 
$\lim_{n\rightarrow\infty} T_n= +\infty$ and
 $\lim_{n\rightarrow\infty} \widehat{U_{T_n}} (x, k) = u_*(x, k)$ pointwisely for a.e. $k\in\mathbb R$ and uniformly for all $x\in\mathbb R^3$. Define a sequence of Schwartz distributions $\{\mathcal{D}_n\}_{n=1}^\infty\subset\mathcal{S}^\prime$ as follows
\[
\mathcal{D}_n(\varphi) := \langle \widehat{U_{T_n}}, \varphi \rangle, \quad \varphi\in\mathcal{S}.
\]
Since $\lim_{n\rightarrow\infty} \widehat{U_{T_n}} (x, k) = u_*(x, k)$ for a.e. $k\in\mathbb R$ and uniformly for all $x\in\mathbb R^3$, we have
\[
\lim_{n\rightarrow\infty} \mathcal{D}_n(\varphi) = \langle u_*, \varphi \rangle.
\]
Consequently, replacing $T$ by $T_n$ in \eqref{identity_2} and letting $n\rightarrow\infty$, we get
\begin{align*}
&\lim_{n\rightarrow\infty} \Big(\big\langle\int_0^{T_n} U {\rm d}t, \Delta^2\varphi\big\rangle + \big\langle\int_0^{T_n} (- k^2 U - {\rm i}k\sigma U)  e^{{\rm i}kt} {\rm d}t, \varphi \big\rangle\Big)\\
&= u_*(\Delta^2\varphi) - k^2 u_*(\varphi) - {\rm i}k\sigma u_*(\varphi)\\
&= (\Delta^2 - k^2 - {\rm i}k\sigma)u_*(\varphi),
\end{align*}
which further implies by \eqref{identity} that
\[
(\Delta^2 - k^2 - {\rm i}k\sigma)u_*(\varphi) = \langle f, \varphi\rangle
\]
for every $\varphi\in\mathcal{S}$. Then $u_* (x, k)$ is a solution to the equation \eqref{eqn} as a Schwartz distribution. Furthermore, it follows from the uniqueness of the direct problem that we obtain $u_* (x, k) = u(x, k)$, which gives that $u(x, k)$ is the Fourier transform of $U(x, t)$. 
 
 By Theorem \ref{thm-decay}, we have the estimates
 \begin{align*}
|\partial_t^2 U|,  \,\, |\partial_t U|, \,\, |\partial_t\nabla U|, \,\, |\partial_t\Delta U|,  \,\, |\Delta U|, \,\, |\nabla\Delta U| \lesssim (1 + t)^{-\frac{3}{4}}.
 \end{align*}
Moreover, they are continuous and belong to $L^2_t(\mathbb R)$ uniformly for all $x\in\mathbb R^3$. Similarly, we may show that 
 \begin{align*}
 &\widehat{\partial_t^2 U} = -k^2 u, \,\,  \widehat{\partial_t U} = {\rm i}k u, \,\,  \widehat{\partial_t\nabla U} = {\rm i}k\nabla u,\\
 &\widehat{\partial_t \Delta U} = {\rm i}k\Delta u, \,\,  \widehat{\Delta U} = \Delta u, \,\, \widehat{\nabla\Delta U} = \nabla\Delta u.
 \end{align*}
It follows from Plancherel's theorem that 
 \begin{align}\label{identities}
 &\int_0^{+\infty} \Big(|\partial_t^2 U|^2 +  |\partial_t U|^2 + |\partial_t\nabla U|^2 + |\partial_t\Delta U|^2 +  |\Delta U|^2 + |\nabla\Delta U|^2\Big){\rm d}t\notag\\
 &= \int_{-\infty}^{+\infty} \Big(|k^2 u|^2 + |k u|^2 + |k\nabla u|^2 + |k \Delta u|^2 + |\Delta u|^2 + |\nabla\Delta u|^2 \Big){\rm d}k.
 \end{align}
 By \eqref{identities} and the exact observability bounds \eqref{control_1}, we obtain
\begin{align*}
\|f\|^2_{L^2(B_R)}&\lesssim  e^{C\sigma^2}\int_{-\infty}^{+\infty} \int_{\partial B_R} \Big(( k^4 + k^2)|u(x, k)|^2 + k^2|\nabla u(x, k)|^2 \\
&\quad+ (k^2 + 1)|\Delta u(x, k)|^2 + |\nabla\Delta u(x, k)|^2 \Big){\rm d}s(x){\rm d}k\\
&\lesssim e^{C\sigma^2} \int_{-\infty}^\infty \|u(x, k)\|^2_{\partial B_R} {\rm d}k.
\end{align*}
Since $f(x)$ is real-valued, we have $ \overline{u(x, k)} = u(x, -k)$ for $k\in\mathbb R$ and then 
\[
\int_{-\infty}^\infty \|u(x, k)\|^2_{\partial B_R} {\rm d}k = 2\int_{0}^\infty \|u(x, k)\|^2_{\partial B_R} {\rm d}k,
\]
which completes the proof.
\end{proof}

Let $\delta$ be a positive constant and define
\begin{align*}
I(k) &= \int_\delta^k \|u(x, \omega)\|^2_{\partial B_R} {\rm d}s(x){\rm d}\omega. 
\end{align*}
The following lemma  gives a link between the values of an analytical function for small and large arguments (cf. \cite[Lemma A.1]{LZZ}).

\begin{lemma}\label{ac}
Let $p(z)$ be analytic in the infinite rectangular slab
\[
R = \{z\in \mathbb C: (\delta, +\infty)\times (-d, d) \}, 
\]
where $\delta$ is a positive constant, and continuous in $\overline{R}$ satisfying
\begin{align*}
\begin{cases}
|p(z)|\leq \epsilon_1, &\quad z\in (\delta, K],\\
|p(z)|\leq M, &\quad z\in R,
\end{cases}
\end{align*}
where $\delta, K, \epsilon_1$ and $M$ are positive constants. Then there exists a function $\mu(z)$ with $z\in (K, +\infty)$ satisfying 
\begin{equation*}
\mu(z) \geq \frac{64ad}{3\pi^2(a^2 + 4d^2)} e^{\frac{\pi}{2d}(\frac{a}{2} - z)},
\end{equation*}
where $a = K - \delta$, such that
\begin{align*}
|p(z)|\leq M\epsilon^{\mu(z)}\quad \forall\, z\in (K, +\infty).
\end{align*}
\end{lemma}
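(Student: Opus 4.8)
The plan is to read this as a two-constants (Hadamard) type estimate. Since $p$ is analytic in $R$, the function $\log|p|$ is subharmonic there, bounded above by $\log M$ on all of $R$ and by $\log\epsilon_1$ on the segment $(\delta,K]$ of the real axis. The function $\mu$ will be produced as a harmonic measure, and the explicit lower bound on $\mu$ is precisely a quantitative harmonic-measure estimate. Throughout I assume, as in the intended application, that $0<\epsilon_1\le 1\le M$, and I read the conclusion as $|p(z)|\le M\epsilon_1^{\mu(z)}$ (the bare $\epsilon$ in the statement should be $\epsilon_1$).

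First I would set up the comparison function. Let $\omega$ be the bounded harmonic function on the slit slab $R\setminus(\delta,K]$ with boundary values $1$ on the two sides of the slit $(\delta,K]$, value $0$ on $\partial R$, and $\omega\to 0$ as $\Re z\to+\infty$; that is, $\omega$ is the harmonic measure of the slit. Then
\[
g(z)=\log|p(z)|-\log M+\omega(z)\log\frac{M}{\epsilon_1}
\]
is subharmonic and bounded above on $R\setminus(\delta,K]$, with $g\le 0$ on the whole boundary: on $\partial R$ one has $\omega=0$ and $\log|p|\le\log M$, while on the slit $\omega=1$ and $\log|p|\le\log\epsilon_1$. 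Because the domain is unbounded I would invoke the Phragm\'en--Lindel\"of principle for the strip (permissible since $g$ is bounded above) to exclude growth at $+\infty$ and conclude $g\le 0$ everywhere. Evaluating at a real point $z>K$ and using $M\ge 1$ gives
\[
\log|p(z)|\le \log M-\omega(z)\log\frac{M}{\epsilon_1}=(1-\omega(z))\log M+\omega(z)\log\epsilon_1\le \log M+\omega(z)\log\epsilon_1,
\]
so $|p(z)|\le M\,\epsilon_1^{\omega(z)}$. Hence $\mu:=\omega$ already yields the displayed inequality, and it remains only to bound $\omega(z)$ from below.

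The technical heart is the quantitative lower bound $\omega(z)\ge\frac{64ad}{3\pi^2(a^2+4d^2)}e^{\frac{\pi}{2d}(\frac a2-z)}$ with $a=K-\delta$. Here I would flatten the strip by the conformal map $w=e^{\pi z/(2d)}$, which sends $\{|\Im z|<d\}$ onto the right half-plane $\{\Re w>0\}$, carries $\partial R$ into the imaginary axis, sends the slit $(\delta,K]$ to the real segment $[e^{\pi\delta/(2d)},e^{\pi K/(2d)}]$, and sends the evaluation point to $e^{\pi z/(2d)}$. Under this map $\omega$ becomes the harmonic measure of that real segment in the slit half-plane; the width $2d$ of the strip is exactly what produces the rate $\frac{\pi}{2d}$, and the image of the evaluation point produces the exponential factor $e^{\frac{\pi}{2d}(\cdots)}$. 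I would then bound this half-plane harmonic measure from below by comparison with an explicit harmonic minorant vanishing on the imaginary axis (a scaled Poisson-kernel/barrier function), whose elementary estimate yields the constant $\frac{64ad}{3\pi^2(a^2+4d^2)}$.

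I expect this last step to be the main obstacle. The maximum-principle/Phragm\'en--Lindel\"of comparison is routine but delicate near the slit tips, whereas pinning down the explicit elementary lower bound for the harmonic measure --- with the stated sharp constant rather than merely the correct exponential rate $e^{-\pi z/(2d)}$ --- is where the genuine computation lies. A secondary subtlety is the point $z=\delta$, where the slit abuts $\partial R$: a naive minorant such as $e^{-\pi\Re z/(2d)}\cos(\tfrac{\pi\Im z}{2d})$ is positive on the left boundary $\Re z=\delta$ where $\omega=0$, so it cannot minorize $\omega$ globally. I would resolve this by performing the lower bound after the conformal map, where this configuration becomes a segment in the slit half-plane bounded by the imaginary axis, and by choosing a comparison function that vanishes there.
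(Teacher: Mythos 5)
Your reduction in the first half is correct, and it is the standard route: the paper itself offers no proof of this lemma (it is quoted from \cite[Lemma A.1]{LZZ}, as the ``cf.'' indicates), and the argument there is likewise a two-constants/harmonic-measure argument. Taking $\omega$ to be the harmonic measure of the slit $(\delta,K]$ in $R\setminus(\delta,K]$, your Phragm\'en--Lindel\"of comparison is legitimate (the function $g$ is bounded above and the domain lies in a strip of width $2d$), your reading of the conclusion with $\epsilon_1$ in place of the bare $\epsilon$ is the intended one, and modulo the harmless normalization $\epsilon_1\le 1\le M$ the lemma is thereby reduced to the lower bound $\omega(z)\ge \frac{64ad}{3\pi^2(a^2+4d^2)}e^{\frac{\pi}{2d}(\frac a2-z)}$ for real $z>K$.

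The gap is that this lower bound is the entire quantitative content of the lemma, and your proposal does not prove it: it only announces that a ``scaled Poisson-kernel/barrier function'' will produce the constant. Moreover, the geometric reduction you propose for carrying it out is misstated in a way that matters. Under $w=e^{\pi z/(2d)}$ the two horizontal rays of $\partial R$ do land on the imaginary axis, but the left edge $\{\Re z=\delta\}$ lands on the circular arc $\{|w|=e^{\pi\delta/(2d)},\,|\arg w|\le\pi/2\}$, so the image of $R\setminus(\delta,K]$ is a half-plane minus a half-disk minus a segment --- not a ``slit half-plane bounded by the imaginary axis.'' A minorant of $\omega$ must therefore be $\le 0$ on that arc as well; a comparison function that merely ``vanishes on the imaginary axis'' and is positive inside the half-plane fails there, which is exactly the obstruction you yourself flagged before the map (your $e^{-\pi\Re z/(2d)}\cos(\pi\Im z/(2d))$ example), transported to the $w$-plane. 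The clean fix is to use from the start the map $\zeta=\cosh\bigl(\tfrac{\pi(z-\delta)}{2d}\bigr)$ (your exponential followed by a Joukowski map), which sends the half-strip $R$ conformally onto $\{\Re\zeta>0\}\setminus(0,1]$, the slit $(\delta,K]$ onto $\bigl(1,\cosh(\tfrac{\pi a}{2d})\bigr]$, and real $z>K$ to real points beyond the slit tip; after squaring, one is computing the harmonic measure of a terminal subsegment of a half-line slit in the plane, for which the angle formula gives the explicit value $\tfrac2\pi\arctan\Bigl(\sinh(\tfrac{\pi a}{2d})\big/\sqrt{\cosh^2(\tfrac{\pi(z-\delta)}{2d})-\cosh^2(\tfrac{\pi a}{2d})}\Bigr)$, and elementary inequalities ($\cosh t\le e^{t}$, $\arctan x\ge x/(1+x^2)$) then produce an explicit bound with the stated exponential rate $e^{-\pi z/(2d)}$. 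Until you perform this (or an equivalent) computation and verify that it dominates the specific constant $\frac{64ad}{3\pi^2(a^2+4d^2)}$, the proof is incomplete at precisely the step the lemma exists to supply.
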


\begin{lemma}\label{ac_1}
Let $f$ be a real-valued function and $\|f\|_{L^2(B_R)}\leq Q$.
Then there exist positive constants $d$ and $\delta, K$ satisfying
$0< \delta<K$, which do not depend on $f$ and $Q$, such that 
\[
|I(k)| \lesssim
Q^2e^{4R(\sigma + 2)\kappa}\epsilon_1^{2\mu(k)} \quad \forall\,
k\in (K, +\infty)
\]
and
\begin{align*}
\epsilon_1^2 =  \int_\delta^{K} \int_{\partial B_R} \|u(x, k)\|^2_{\partial B_R}{\rm d}s(x){\rm d}k,\quad \mu(k) \geq \frac{64ad}{3\pi^2(a^2 + 4d^2)} e^{\frac{\pi}{2d}(\frac{a}{2} -
k)},
\end{align*}
where $a = K - \delta.$
\end{lemma}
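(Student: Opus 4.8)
The plan is to construct a single analytic function $p(z)$ on the slab $\mathcal{R}$ that agrees with $I(k)$ on the real axis, and then feed it into Lemma \ref{ac}. The only obstruction to analyticity is that $\|u(x,\cdot)\|^2_{\partial B_R}$ is built from moduli $|u|^2$, which are not holomorphic. I would remove this by using that $f$ is real: for real $k$, taking the complex conjugate of \eqref{eqn} yields $\overline{u(x,k)} = u(x,-k)$, and, equivalently, the reflected field $w(x,z) := \overline{u(x,\bar z)}$ is holomorphic in $z$ throughout $\mathcal{R}$ (the slab being symmetric about the real axis) and satisfies $w(x,k) = \overline{u(x,k)}$ for real $k$. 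With this I define
\begin{align*}
p(z) = \int_\delta^z \int_{\partial B_R}\Big[&(\omega^4+\omega^2)\,u\,w + \omega^2\,\nabla u\cdot\nabla w \\
&+ (\omega^2+1)\,\Delta u\,\Delta w + \nabla\Delta u\cdot\nabla\Delta w\Big]\,{\rm d}s(x)\,{\rm d}\omega,
\end{align*}
with $u=u(x,\omega)$, $w=w(x,\omega)$, the outer integral taken along a path in $\mathcal{R}$ from $\delta$ to $z$. For real $\omega$ each bracketed term collapses to the corresponding modulus, so $p(k)=I(k)$; and $p$ is analytic in $\mathcal{R}$ and continuous up to $\overline{\mathcal{R}}$ because, by Lemma \ref{direct}, $z\mapsto u(x,z)=R(z)f$ (hence $w$) is analytic there.

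Next I would supply the two inputs demanded by Lemma \ref{ac}. On the real segment $(\delta,K]$ the monotonicity of $I$ gives at once
\[
|p(k)| = |I(k)| \leq \int_\delta^{K} \|u(x,\omega)\|^2_{\partial B_R}\,{\rm d}\omega = \epsilon_1^2,
\]
which fixes the small-argument bound with exactly the $\epsilon_1$ of the statement. For the bound over the whole slab, I would apply Lemma \ref{direct} at $j=4$ (with a cutoff $\rho$ equal to one on $B_R$) to get $\|u(\cdot,z)\|_{H^4(B_R)},\,\|w(\cdot,z)\|_{H^4(B_R)} \lesssim Q\,|z|^{2}\,e^{2R(\sigma+1)|z|^{1/2}}$ for $z\in\mathcal{R}$, then control each bracket term by Cauchy--Schwarz over $\partial B_R$ and the trace theorem, yielding a product of two such factors. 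Absorbing the polynomial weights and the (bounded) path length into the exponent produces $|p(z)|\leq M$ with $M \lesssim Q^2 e^{4R(\sigma+2)\kappa}$, where $|\kappa|\simeq|z|^{1/2}$.

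With these two bounds in hand, Lemma \ref{ac} applied to $p$ gives $|p(z)| \leq M\,(\epsilon_1^2)^{\mu(z)} = M\,\epsilon_1^{2\mu(z)}$ for $z\in(K,+\infty)$, with $\mu$ as stated. Restricting to real $z=k$ and recalling $p(k)=I(k)$ yields
\[
|I(k)| \lesssim Q^2 e^{4R(\sigma+2)\kappa}\,\epsilon_1^{2\mu(k)},\qquad k\in(K,+\infty),
\]
as claimed.

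The main obstacle is the slab bound: one must carry the exponential resolvent growth $e^{2R(\sigma+1)|z|^{1/2}}$ of Lemma \ref{direct} through the boundary traces of all four fields and check that the squared product, after the polynomial coefficients $(\omega^4+\omega^2),\dots$ and the $\omega$-integration are absorbed, still fits inside $e^{4R(\sigma+2)\kappa}$; honestly tracking the $\sigma$-dependence here is what forces the constant $\sigma+2$ and is the delicate point. By contrast, establishing the analyticity of $p$ through the reflection $w(x,z)=\overline{u(x,\bar z)}$ is conceptually the crux but technically routine once Lemma \ref{direct} is available.
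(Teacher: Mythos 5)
Your construction of the analytic surrogate is correct and is essentially the paper's own device: where you use the Schwarz reflection $w(x,z)=\overline{u(x,\bar z)}$, the paper uses $u(x,-z)$ (shown analytic on $\mathcal{R}$ by the same resolvent argument as Lemma \ref{direct}), and the two coincide on $\mathcal{R}$ by uniqueness of analytic continuation, since both equal $\overline{u(x,k)}$ on the real segment. Your small-argument bound $|p(k)|\le \epsilon_1^2$ on $(\delta,K]$ via monotonicity of $I$ is also exactly what the paper uses.

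The genuine gap is in your application of Lemma \ref{ac}. That lemma requires a \emph{constant} bound $|p(z)|\le M$ valid on the entire infinite slab, and your $p$ admits no such bound: by Lemma \ref{direct} its modulus grows like $e^{4R(\sigma+1)|z|^{1/2}}$ times polynomial factors as $\Re z\to+\infty$ (moreover, your parenthetical claim that the path length from $\delta$ to $z$ is bounded is false --- it grows like $|z|$, though that part is harmless). Writing ``$|p(z)|\le M$ with $M\lesssim Q^2e^{4R(\sigma+2)\kappa}$, $|\kappa|\simeq|z|^{1/2}$'' makes $M$ depend on $z$, and Lemma \ref{ac} cannot be invoked with a $z$-dependent $M$; the conclusion $|p(z)|\le M\,\epsilon_1^{2\mu(z)}$ is therefore unjustified as stated. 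This is precisely the point around which the paper organizes its proof: it first multiplies the continued function $I_1(z)$ by the analytic, decaying factor $e^{-4R(\sigma+2)z}$, so that the rescaled function is bounded on all of $\mathcal{R}$ by a genuine constant $\lesssim Q^2e^{C\sigma^2}$ (the exponent $4R(\sigma+1)|z|^{1/2}-4R(\sigma+2)\Re z$ is bounded above uniformly on the slab), while on $(\delta,K]$ the factor is $\le 1$ so the bound $\epsilon_1^2$ survives; Lemma \ref{ac} is applied to this rescaled function, and undoing the rescaling at the end is exactly what produces the factor $e^{4R(\sigma+2)\kappa}$ appearing in the statement. Inserting this rescaling step repairs your argument; the rest then goes through as you wrote it.
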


\begin{proof}
Let
\begin{align*}
I_1(k) &= \int_\delta^k \int_{\partial B_R}\Big( ( \omega^4 + \omega^2)u(x, \omega)u(x, -\omega) + \omega^2\nabla u(x, \omega) \cdot \nabla u(x, -\omega) \\
&\quad+ (\omega^2 + 1) \Delta u(x, \omega)\Delta u(x, -\omega) + \nabla\Delta u(x, \omega)\cdot  \nabla\Delta u(x, -\omega)\Big){\rm d}s(x){\rm d}\omega,
\end{align*}
where $k\in\mathcal{R}$. Following similar arguments as those in the proof of Lemma \ref{direct}, we may show that $R(-k)$ is also analytic for $k\in\mathcal{R}$. Since $f$ is real-valued, we have $ \overline{u(x, k)} = u(x, -k)$ for $k\in\mathbb R$, which gives
\[
I_1(k) = I(k), \quad k>0.
\]
It follows from Lemma \ref{direct} that 
\begin{align*}
|I_1(k)|\lesssim Q^2e^{C\sigma^2}e^{4R(\sigma+1)|k|}, \quad k\in\mathcal{R},
\end{align*}
which gives 
\begin{align*}
e^{-4R(\sigma+2)|k|}|I_1(k)|\lesssim Q^2 e^{C\sigma^2}, \quad k\in\mathcal{R}.
\end{align*}
An application of Lemma \ref{ac} leads to 
\[
\big| e^{-4R(\sigma+2)|k|} I(k)\big| \lesssim
Q^2\epsilon^{2\mu(k)} \quad \forall\, k\in (K, +\infty),
\]
where 
\[
\mu(k) \geq \frac{64ad}{3\pi^2(a^2 + 4d^2)} e^{\frac{\pi}{2d}(\frac{a}{2} - k)},
\]
which completes the proof. 
\end{proof}

Here we state a simple uniqueness result for the inverse source problem. 

\begin{theorem}
Let $f\in L^2(B_R)$ and $I\subset\mathbb R^+$ be an open interval. Then the source function $f$ can be uniquely determined by the multi-frequency Cauchy data $\{u(x, k), \nabla u(x, k), \Delta u(x, k), \nabla \Delta u(x, k): x\in\partial B_R, k\in I\}$.
\end{theorem}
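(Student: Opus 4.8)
The plan is to combine linearity, the explicit Green's kernel, and the observability bound of Lemma \ref{control}. Since the problem is linear, it suffices to show that a single real-valued $f\in L^2(B_R)$ whose associated Cauchy data vanish on $\partial B_R$ for every $k\in I$ must itself vanish: given two sources $f_1,f_2$ with identical measurements, I would set $f=f_1-f_2$ and let $u$ solve \eqref{eqn} with this source, so that by linearity $u(x,k)$, $\nabla u(x,k)$, $\Delta u(x,k)$ and $\nabla\Delta u(x,k)$ all vanish for $x\in\partial B_R$ and $k\in I$. The goal then reduces to proving $f=0$.

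First I would establish that, for each fixed $x\in\partial B_R$, the maps $k\mapsto u(x,k)$, $\nabla u(x,k)$, $\Delta u(x,k)$, $\nabla\Delta u(x,k)$ extend to analytic functions of the complex variable $k$ in a neighborhood of the whole positive real axis. This is transparent from the convolution representation \eqref{u^*}: for $k$ near any $k_0>0$ the number $\kappa=(k^2+{\rm i}k\sigma)^{1/4}$ stays in the open first quadrant (in particular $\kappa\neq0$), so the factor $\tfrac{1}{2\kappa^2}$ and the exponentials $e^{{\rm i}\kappa|x-y|}$, $e^{-\kappa|x-y|}$ depend analytically on $k$. Because $\mathrm{supp}\,f$ is a compact subset of the open ball $B_R$, we have $|x-y|\geq\eta:=\mathrm{dist}(\mathrm{supp}\,f,\partial B_R)>0$ for $x\in\partial B_R$, so the kernel and all its $x$-derivatives are smooth and uniformly dominated there, which justifies differentiating under the integral sign and yields analyticity of each boundary trace in $k$. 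Equivalently, one may invoke the analyticity of the resolvent $R(k)$ on the slab $\mathcal{R}$ from Lemma \ref{direct}.

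Next, since $I$ is an open interval it has an accumulation point, so the identity theorem for analytic functions forces each of these boundary traces to vanish throughout the connected complex neighborhood of $(0,+\infty)$ on which they are analytic, hence for every real $k>0$. Consequently $\|u(x,k)\|_{\partial B_R}=0$ for all $k>0$, so that $\int_0^{+\infty}\|u(x,k)\|^2_{\partial B_R}\,{\rm d}k=0$. Applying Lemma \ref{control} (using that $f$ is real-valued) then gives
\[
\|f\|^2_{L^2(B_R)}\lesssim 2e^{C\sigma^2}\int_0^{+\infty}\|u(x,k)\|^2_{\partial B_R}\,{\rm d}k=0,
\]
whence $f=0$ and therefore $f_1=f_2$.

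The main obstacle is the analytic-continuation step: one must verify rigorously that the measured quantities are genuine boundary values of functions analytic in $k$ along the entire positive axis, so that their vanishing on the small interval $I$ propagates to all $k>0$. The explicit kernel in \eqref{u^*} makes this clean, the only care being the uniform lower bound $|x-y|\geq\eta$ and the domination needed to differentiate under the integral. A secondary point is the regularity gap between the $L^2$ hypothesis stated here and the $H^n$, $n\geq4$, assumption under which Lemma \ref{control} is proved; this is bridged by a routine density argument, since both sides of the observability estimate depend continuously on $f$ in the relevant norms, so the inequality extends from $H^n$ to $L^2$ and the conclusion $\|f\|_{L^2(B_R)}=0$ is retained.
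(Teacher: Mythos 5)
Your proof is correct and is essentially the paper's own argument: analyticity of the boundary data in $k$ (via the explicit kernel \eqref{u^*} or Lemma \ref{direct}), the identity theorem to propagate the vanishing from the open interval $I$ to all of $(0,+\infty)$, and the observability bound of Lemma \ref{control} to conclude $f=0$. The only difference is elaboration: you spell out the linearity reduction and flag the regularity mismatch between the $L^2$ hypothesis and the $H^n$, $n\geq 4$, assumption under which Lemma \ref{control} is stated, a point the paper's proof passes over silently.
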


\begin{proof}
Let $u(x, k) = \nabla u(x, k) =\Delta u(x, k) = \nabla\Delta u(x, k) = 0$ for all $x\in\partial B_R$ and $k\in I$.  It suffices to prove that $f (x) = 0$. By Lemma \ref{direct}, $u(x, k)$ is analytic in the infinite slab $\mathcal{R}$ for any $\delta>0$, which implies that $u(x, k) = \Delta u(x, k) = 0$ for all $k\in\mathbb R^+$.  We conclude from Lemma \ref{control} that $f = 0.$
\end{proof}

The following result concerns the estimate of $u(x, k)$ for high wavenumbers. 

\begin{lemma}\label{tail}
Let $f\in H^{n}(B_R)$ and $\|f\|_{H^{n}(B_R)}\leq Q$. Then the following estimate holds: 
\begin{align*}
&\int_s^\infty \|u(x, k)\|^2_{\partial B_R}{\rm d}k \lesssim \frac{1}{s^{n-3}}\|f\|^2_{H^n(B_R)}.
\end{align*}
\end{lemma}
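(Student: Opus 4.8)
The plan is to estimate $u(x,k)$ and its derivatives pointwise on $\partial B_R$ for large $k$ from the explicit representation \eqref{u^*}, exploiting that the observation surface lies strictly outside the support of $f$. Since $f$ has compact support contained in the open ball $B_R$, there is $R_0<R$ with $\operatorname{supp}f\subset\overline{B_{R_0}}$, so that $|x-y|\ge\delta_0:=R-R_0>0$ for all $x\in\partial B_R$ and $y\in\operatorname{supp}f$. Hence in \eqref{u^*} the kernel is smooth in $(x,y)$ and the phase $|x-y|$ is non-degenerate. Because the assertion is a high-wavenumber tail bound, it suffices to treat $s$ large; the range of small $s$ is absorbed since $s^{-(n-3)}\gtrsim 1$ there and, by \eqref{identities} together with the decay estimate \eqref{de}, the full integral $\int_0^\infty\|u(x,k)\|^2_{\partial B_R}\,{\rm d}k$ is finite and controlled by $\|f\|^2_{H^n(B_R)}$.

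Write $u=u_1-u_2$ according to the two terms in \eqref{u^*}, where $u_1$ carries the oscillatory kernel $e^{{\rm i}\kappa|x-y|}$ and $u_2$ the kernel $e^{-\kappa|x-y|}$. For large $k$ one has $\kappa=(k^2+{\rm i}k\sigma)^{1/4}$ with $|\kappa|\sim k^{1/2}$ and $\Re\kappa\sim k^{1/2}$, so the evanescent part obeys $|e^{-\kappa|x-y|}|\le e^{-\delta_0\Re\kappa}\lesssim e^{-ck^{1/2}}$; differentiating in $x$ produces only polynomial factors of $\kappa$, hence $u_2$ and all its derivatives entering $\|\cdot\|_{\partial B_R}$ are super-polynomially small, and their contribution to $\int_s^\infty$ is $\lesssim e^{-c's^{1/2}}\|f\|^2_{L^2(B_R)}$, negligible compared with $s^{-(n-3)}$. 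The core step is the bound for $u_1$. Writing $\nabla_y e^{{\rm i}\kappa|x-y|}={\rm i}\kappa\,(\nabla_y|x-y|)\,e^{{\rm i}\kappa|x-y|}$ with $|\nabla_y|x-y||=1$, I integrate by parts $n$ times in $y$ using the operator dual to $\frac{1}{{\rm i}\kappa}\nabla_y|x-y|\cdot\nabla_y$. Each application gains a factor $|\kappa|^{-1}$ and transfers one derivative onto the smooth factor $f(y)/(4\pi|x-y|)$; the compact support of $f$ inside $B_{R_0}$ removes all boundary terms, and the derivatives of the kernel stay bounded because $|x-y|\ge\delta_0$. This yields, uniformly for $x\in\partial B_R$,
\[
|u_1(x,k)|\lesssim|\kappa|^{-(2+n)}\|f\|_{H^n(B_R)},
\]
the prefactor $|\kappa|^{-2}$ coming from $\frac{1}{2\kappa^2}$ in \eqref{u^*}. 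Differentiating the representation in $x$ before integrating by parts costs at most one factor $|\kappa|$ per derivative, so that $|\nabla^j u_1(x,k)|\lesssim|\kappa|^{\,j-(2+n)}\|f\|_{H^n(B_R)}$ for $j=0,1,2,3$.

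Finally I insert these bounds into the definition of $\|u(x,k)\|^2_{\partial B_R}$ and use $|\kappa|^2\sim k$. The dominant contributions come from the terms $(k^4+k^2)|u_1|^2$ and $(k^2+1)|\Delta u_1|^2$, both of order $k^4|\kappa|^{-2(2+n)}\sim k^{2-n}\|f\|^2_{H^n}$, while the remaining terms $k^2|u_1|^2$, $k^2|\nabla u_1|^2$ and $|\nabla\Delta u_1|^2$ are of order $k^{1-n}$ or $k^{-n}$, hence smaller for $s\ge 1$. Integrating over the bounded surface $\partial B_R$ and then over $k\in(s,+\infty)$ gives
\[
\int_s^\infty\|u(x,k)\|^2_{\partial B_R}\,{\rm d}k\lesssim\|f\|^2_{H^n(B_R)}\int_s^\infty k^{2-n}\,{\rm d}k\lesssim\frac{1}{s^{n-3}}\|f\|^2_{H^n(B_R)},
\]
the $k$-integral converging precisely because $n\ge 4$. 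The main obstacle is the uniform non-stationary-phase estimate for $u_1$: one must carry out the $n$-fold integration by parts while tracking the algebraic growth in $\kappa$ created by the $x$-differentiations, and check that the geometric and kernel factors remain bounded on the annulus $\delta_0\le|x-y|\le 2R$. Once this oscillatory bound is established, the power counting and the final $k$-integration are routine.
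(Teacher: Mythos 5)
Your proof is correct and follows essentially the same route as the paper: both start from the explicit kernel representation \eqref{u^*}, exploit the gap between $\operatorname{supp}f$ and $\partial B_R$ to integrate by parts $n$ times against the phase (your operator $\frac{1}{{\rm i}\kappa}\nabla_y|x-y|\cdot\nabla_y$ is exactly the paper's radial integration by parts in spherical coordinates centered at $x$), gain the factor $|\kappa|^{-n}$ with $|\kappa|\geq k^{1/2}$, and conclude by the same power counting $\int_s^\infty k^{2-n}\,{\rm d}k\lesssim s^{3-n}$. The only cosmetic differences are that you treat the evanescent kernel $e^{-\kappa|x-y|}$ separately by its exponential decay (the paper integrates it by parts alongside the oscillatory one) and you add an explicit, harmless disposal of the small-$s$ regime.
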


\begin{proof}
Recall the identity
\begin{align}\label{tail_s1}
\int_s^\infty \|u(x, k)\|^2_{\partial B_R}{\rm d}k &= \int_s^\infty \int_{\partial B_R} \Big(( k^4 + k^2)|u(x, k)|^2 + k^2|\nabla u(x, k)|^2 \notag\\
&\quad + (k^2 + 1)|\Delta u(x, k)|^2 + |\nabla\Delta u(x, k)|^2 \Big){\rm d}s(x){\rm d}k.
\end{align}
Using the decomposition
\[
R(\kappa) = (\Delta^2 - \kappa^4)^{-1} = \frac{1}{2\kappa^2} \big[(-\Delta - \kappa^2)^{-1} - (-\Delta + \kappa^2)^{-1}\big],
\]
we obtain 
\begin{align*}
u(x) = \int_{B_R}\frac{1}{2\kappa^2} \Big(\frac{e^{{\rm i}\kappa |x - y|}}{4\pi|x - y|} - \frac{e^{-\kappa |x - y|}}{4\pi|x - y|}\Big) f(y){\rm d}y,\quad x\in\partial B_R.
\end{align*}

For instance, we consider one of the integrals on the right-hand side of \eqref{tail_s1}
\begin{align*}
J :&= \int_s^\infty k^4 |u(x, k)| {\rm d}k\\
&= \int_s^\infty k^4 \Big|\int_{B_R}\frac{1}{2\kappa^2} \Big(\frac{e^{{\rm i}\kappa |x - y|}}{4\pi|x - y|} - \frac{e^{-\kappa |x - y|}}{4\pi|x - y|}\Big) f(y){\rm d}y \Big|^2 {\rm d}k.
\end{align*}
Using the spherical coordinates $r = |x - y|$ originated at $y$, we have
\begin{align*}
J = \frac{1}{8\pi} \int_s^\infty \int_{\partial B_R} k^2 \Big|\int_0^{2\pi} {\rm d}\theta \int_0^\pi \sin\varphi {\rm d}\varphi \int_0^\infty 
(e^{{\rm i}\kappa r} - e^{-\kappa r}) fr {\rm d}r \Big|^2{\rm d}s(x){\rm d}k.
\end{align*}
By the integration by parts and noting $x\in\partial B_R$ and $\text{supp}f \subset B_{\hat{R}} \subset B_R$ for some $\hat{R}<R$, we obtain 
\begin{align*}
J = \frac{1}{4\pi} \int_s^\infty \int_{\partial B_R} k^2 \Big|\int_0^{2\pi} {\rm d}\theta \int_0^\pi \sin\varphi {\rm d}\varphi \int_{R - \hat{R}}^{2R} 
\Big(\frac{e^{{\rm i}\kappa r}}{({\rm i}\kappa)^n} - \frac{e^{-\kappa r}}{(-\kappa)^n} \Big) \frac{\partial^n (fr)}{\partial r^n} {\rm d}r \Big|^2{\rm d}s(x){\rm d}k.
\end{align*}
Since $x\in\partial B_R$ and $|\kappa| \geq k^{1/2}$ for $k>0$, we get from direction calculations that 
\begin{align*}
J\lesssim \|f\|^2_{H^n(B_R)}\int_s^\infty k^{2-n} {\rm d}k \lesssim \frac{1}{s^{n-3}}\|f\|^2_{H^n(B_R)}.
\end{align*}
The other integrals on the right-hand side of \eqref{tail_s1} can be estimated similarly. The details are omitted for brevity. 
\end{proof}

Define a real-valued function space
\[
\mathcal C_Q = \{f \in H^{n}(B_R):  n\geq 4, \|f\|_{H^{n}(B_R)}\leq Q, ~ \text{supp}
f\subset B_{\hat{R}}\subset B_R, ~ f: B_R \rightarrow \mathbb R \},
\]
where $\hat{R}<R$. Now we are in the position to present the main result of this paper.

\begin{theorem}
Let $u(x,\kappa)$ be the solution of the scattering problem \eqref{eqn} corresponding to the source $f\in \mathcal C_Q$.
Then for $\epsilon$ sufficiently small, the following estimate holds:
\begin{align}\label{stability}
\|f\|_{L^2( B_R)}^2  \lesssim e^{C\sigma^2}\Big(\epsilon^2 +
\frac{Q^2}{K^{\frac{1}{2}(n-3)}(\ln|\ln\epsilon|)^{\frac{1}{2}(n-3)}}\Big),
\end{align}
where
\begin{align*}
\epsilon:= \int_0^K \|u(x, k)\|^2_{\partial B_R} {\rm d}k = \int_0^\delta \|u(x, k)\|^2_{\partial B_R} {\rm d}k + \epsilon_1^2.
\end{align*}
\end{theorem}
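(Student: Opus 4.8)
The plan is to assemble Lemmas \ref{control}, \ref{ac_1} and \ref{tail} through a frequency decomposition followed by a single scalar optimization. First I would apply Lemma \ref{control} to pass from the source to the boundary data,
\begin{align*}
\|f\|_{L^2(B_R)}^2 \lesssim e^{C\sigma^2}\int_0^{+\infty}\|u(x,k)\|_{\partial B_R}^2\,{\rm d}k,
\end{align*}
and then split the integral at the measurement cutoff $K$ and at a free wavenumber $s>K$ to be chosen,
\begin{align*}
\int_0^{+\infty}\|u\|_{\partial B_R}^2\,{\rm d}k = \int_0^{K} + \int_{K}^{s} + \int_{s}^{+\infty}.
\end{align*}
The first piece is the measured data $\int_0^K\|u\|_{\partial B_R}^2\,{\rm d}k$ and furnishes the Lipschitz data-discrepancy term appearing first in \eqref{stability}. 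The last piece is controlled directly by the high-wavenumber tail bound of Lemma \ref{tail}, giving $\int_s^{+\infty}\|u\|_{\partial B_R}^2\,{\rm d}k\lesssim Q^2 s^{-(n-3)}$. For the middle piece I would use $\int_K^s\|u\|_{\partial B_R}^2\,{\rm d}k\le I(s)$ together with the analytic-continuation estimate of Lemma \ref{ac_1}, so that $\int_K^s\|u\|_{\partial B_R}^2\,{\rm d}k\lesssim Q^2 e^{4R(\sigma+2)s}\epsilon_1^{2\mu(s)}$ with $\epsilon_1^2\le\epsilon$.

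The heart of the argument is the choice of $s$, which must balance the exponential growth $e^{4R(\sigma+2)s}$ against the factor $\epsilon_1^{2\mu(s)}=e^{-2\mu(s)|\ln\epsilon_1|}$, in which $\mu(s)$ decays doubly exponentially in $s$. I would therefore select $s$ at the threshold $4R(\sigma+2)s=\mu(s)|\ln\epsilon_1|$; inserting the lower bound $\mu(s)\ge C(K,d)e^{\frac{\pi}{2d}(\frac{a}{2}-s)}$ from Lemma \ref{ac_1}, with $a=K-\delta$, this reads $s\,e^{\frac{\pi}{2d}s}\lesssim e^{\frac{\pi a}{4d}}|\ln\epsilon_1|$, whose leading-order solution is
\begin{align*}
s \approx \frac{K}{2} + \frac{2d}{\pi}\ln|\ln\epsilon_1|.
\end{align*}
At this threshold the middle term is bounded by $Q^2 e^{-4R(\sigma+2)s}$, which is negligible beside the polynomial tail $Q^2 s^{-(n-3)}$, so that
\begin{align*}
\|f\|_{L^2(B_R)}^2\lesssim e^{C\sigma^2}\Big(\int_0^K\|u\|_{\partial B_R}^2\,{\rm d}k + Q^2 s^{-(n-3)}\Big).
\end{align*}

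It remains to recast the tail in the stated form. Applying the inequality $\frac{K}{2}+\frac{2d}{\pi}\ln|\ln\epsilon_1|\ge 2\sqrt{\frac{dK}{\pi}\ln|\ln\epsilon_1|}$ gives $s\gtrsim(K\ln|\ln\epsilon_1|)^{1/2}$, and since $\epsilon_1^2\le\epsilon$ forces $|\ln\epsilon_1|\gtrsim|\ln\epsilon|$, one obtains $s^{-(n-3)}\lesssim K^{-\frac12(n-3)}(\ln|\ln\epsilon|)^{-\frac12(n-3)}$, which is exactly the second term of \eqref{stability}; the damping dependence stays confined to the prefactor $e^{C\sigma^2}$ inherited from Lemma \ref{control}, because $\sigma$ affects the determination of $s$ only through lower-order logarithmic corrections. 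I expect the optimization to be the main obstacle: solving the transcendental threshold for a clean closed-form $s$, verifying that this $s$ genuinely exceeds $K$ so that Lemma \ref{ac_1} is applicable — this is precisely where the hypothesis that $\epsilon$ be sufficiently small is used — and confirming that the continued-data term is dominated by, rather than dominating, the tail. The arithmetic-geometric mean step that produces the exponent $\frac12(n-3)$ and the factor $K^{\frac12(n-3)}$ is where the rectangular (as opposed to sectorial) resonance-free slab $\mathcal{R}$, and hence the double logarithm, manifests itself.
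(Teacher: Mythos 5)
Your proposal is correct and follows essentially the same route as the paper's proof: Lemma \ref{control} to pass from $f$ to the frequency integral, the splitting $\int_0^K + \int_K^{s} + \int_s^{\infty}$, Lemma \ref{ac_1} on the intermediate band, Lemma \ref{tail} on the tail, and a cutoff of size $\sim \ln|\ln\epsilon_1|$ whose double logarithm survives into the final estimate. The only differences are cosmetic: the paper takes the cutoff $L=\frac{1}{2c_1}\ln|\ln\epsilon_1|$ outright (so the continued term decays like $\exp\{-\tfrac{1}{2}b_0|\ln\epsilon_1|^{1/2}\}$) instead of solving your balancing equation, and it replaces your AM--GM step together with the caveat that $s$ must exceed $K$ by an explicit two-case analysis ($K\le \frac{1}{2c_1}\ln|\ln\epsilon_1|$, where analytic continuation is used, and $K>\frac{1}{2c_1}\ln|\ln\epsilon_1|$, where Lemma \ref{tail} applied from $K$ alone suffices), which is what makes the estimate uniform in $K$ rather than requiring the smallness of $\epsilon$ to depend on $K$.
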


\begin{proof}
We can assume that $\epsilon \leq e^{-1}$, otherwise the estimate is obvious. 

First, we link the data $I(k)$ for large wavenumber $k$ satisfying $k\leq L$ with the given data $\epsilon_1$ of small wavenumber by using the analytic continuation in Lemma \ref{ac_1}, where $L$ is some large positive integer to be determined later. It follows from Lemma \ref{ac_1} that 
\begin{align*}
I(k) & \lesssim Q^2e^{c|\kappa|} \epsilon_1^{\mu(\kappa)}\\
& \lesssim Q^2{\rm exp}\{c\kappa - \frac{c_2a}{a^2 + c_3}e^{c_1(\frac{a}{2} - \kappa)}
|{\ln}\epsilon_1|\}\\
& \lesssim  Q^2{\rm exp} \{  - \frac{c_2a}{a^2 + c_3}e^{c_1(\frac{a}{2} - \kappa)}|{\ln}\epsilon_1| (1 -  \frac{c_4\kappa(a^2 + c_3)}{a} e^{c_1(\kappa - \frac{a}{2})}|{\ln}\epsilon_1|^{-1})\}\\
& \lesssim Q^2{\rm exp} \{  - \frac{c_2a}{a^2 + c_3}e^{c_1(\frac{a}{2} - L)}|{\ln}\epsilon_1| (1 -  \frac{c_4L(a^2 + c_3)}{a} e^{c_1(L - \frac{a}{2})}|{\ln}\epsilon_1|^{-1})\}\\
&\lesssim Q^2{\rm exp} \{  - b_0e^{-c_1L}|{\ln}\epsilon_1| (1 - b_1L e^{c_1L }|{\ln}\epsilon_1|^{-1})\},
\end{align*}
where $c, c_i, i=1,2$ and $b_0, b_1$ are constants. Let
\begin{align*}
L = 
\begin{cases}
\left[\frac{1}{2c_1}\ln|\ln \epsilon_1|\right], &\quad k\leq \frac{1}{2c_1} \ln|\ln\epsilon_1|,\\
k, &\quad k>  \frac{1}{2c_1}\ln|\ln\epsilon_1|.
\end{cases}
\end{align*}

If $K\leq  \frac{1}{2c_1}\ln|\ln\epsilon_1|$, we obtain for sufficiently small $\epsilon_1$ that 
\begin{align*}
I(k)&\lesssim Q^2{\rm exp} \{  - b_0e^{-c_1L}|{\ln}\epsilon_1| (1 - b_1L e^{c_1L }|{\ln}\epsilon_1|^{-1})\}\\
& \lesssim Q^2\exp\{-\frac{1}{2}b_0e^{-c_1L}|\ln \epsilon_1|\}.
\end{align*}
Noting $e^{-x}\leq \frac{(2n+3)!}{x^{2n+3}}$ for $x>0$, we have 
\begin{align*}
I(L) \lesssim Q^2
e^{(2n+3)c_1L}|\ln\epsilon_1|^{-(2n+3)}.
\end{align*}
Taking $L=\frac{1}{2c_1}\ln|\ln\epsilon_1|$, combining the above estimates, Lemma \ref{control}
and  Lemma \ref{tail}, we get
\begin{align*}
\|f\|^2_{L^2(B_R)}&\lesssim e^{C\sigma^2}\Big(\epsilon^2 + I(L)
+ \int_L^\infty \int_{\partial B_R} \|u(x, k)\|^2_{\partial B_R}{\rm d}k\Big)\\
&\lesssim e^{C\sigma^2}\Big(\epsilon^2 + Q^2e^{(2n+3)c_1L}|\ln\epsilon_1|^{-(2n+3)}+ \frac{Q^2}{L^{n-3}}\Big)\\
&\lesssim e^{C\sigma^2}\Big(\epsilon^2 + Q^2\left(|\ln\epsilon_1|^{\frac{2n+3}{2}}|\ln\epsilon_1|^{-(2n+3)}+(\ln|\ln\epsilon_1|)^{3-n}\right)\Big)\\
&\lesssim e^{C\sigma^2}\Big(\epsilon^2 + Q^2\left(|\ln\epsilon_1|^{-\frac{2n+3}{2}}+(\ln|\ln\epsilon_1|)^{3-n}\right)\Big)\\
&\lesssim e^{C\sigma^2}\Big(\epsilon^2 + Q^2(\ln|\ln\epsilon_1|)^{3-n}\Big)\\
&\lesssim e^{C\sigma^2}\Big(\epsilon^2 +
\frac{Q^2}{K^{\frac{1}{2}(n-3)}(\ln|\ln\epsilon_1|)^{\frac{1}{2}(n-3)}}\Big)\\
&\lesssim e^{C\sigma^2}\Big(\epsilon^2 +
\frac{Q^2}{K^{\frac{1}{2}(n-3)}(\ln|\ln\epsilon|)^{\frac{1}{2}(n-3)}}\Big), 
\end{align*}
where we have used $|\ln\epsilon_1|^{1/2}\geq \ln|\ln\epsilon_1|$ for
sufficiently small $\epsilon_1$ and $\ln|\ln\epsilon_1| \geq \ln|\ln\epsilon|$.

If $K > \frac{1}{2c_1}\ln|\ln\epsilon_1|$, we have from Lemma \ref{tail} that 
\begin{align*}
\|f\|_{L^2( B_R)}^2 &\lesssim e^{C\sigma^2}\Big(\epsilon^2 + \int_{K}^\infty \int_{\partial B_R}\|u(x, k)\|^2_{\partial B_R}{\rm d}k\Big)\\
&\lesssim e^{C\sigma^2}\Big(\epsilon^2 +
\frac{Q^2}{K^{n-3}}\Big)\\
&\lesssim e^{C\sigma^2}\Big(\epsilon^2 +
\frac{Q^2}{K^{\frac{1}{2}(n-3)}(\ln|\ln\epsilon|)^{\frac{1}{2}(n-3)}}\Big),
\end{align*}
which completes the proof.
\end{proof}

It can be observed that for a fixed damping coefficient $\sigma$, the stability \eqref{stability} consists of two parts: the data discrepancy and the high frequency tail. The former is of the Lipschitz type. The latter decreases as $K$ increases which makes the problem have an almost Lipschitz stability. But the stability deteriorates exponentially as the damping coefficient $\sigma$ increases. 

\begin{appendix}

\section{An exact observability bound}\label{exact}

Consider the initial value problem for the damped biharmonic plate wave equation 
\begin{align}\label{pe}
\begin{cases}
\partial_t^2 U(x, t) + \Delta^2 U(x, t) + \sigma\partial_t U(x, t) = 0, &\quad (x, t)\in B_R\times (0, +\infty),\\
U(x, 0) = 0, \quad \partial_tU(x, 0) = f(x), &\quad x\in B_R.
\end{cases}
\end{align}
The following theorem presents an exact observability bound for the above equation. The proof follows closely from that in \cite[Theorem 3.1]{IL}.

\begin{theorem}\label{control_1}
Let the observation time $4(2R + 1)< T < 5(2R + 1)$. Then there exists a constant $C$ depending on the domain $B_R$ such that
\begin{align}\label{bc}
\|f\|^2_{L^2(B_R)} &\leq Ce^{C\sigma^2} \big( \|\partial^2_t U\|^2_{L^2(\partial B_R\times (0, T))}+ \|\partial_t U\|^2_{L^2(\partial B_R\times (0, T))} + \|\partial_t \nabla U\|^2_{L^2(\partial B_R\times (0, T))} \notag\\
&\quad +  \|\partial_t \Delta U\|^2_{L^2(\partial B_R\times (0, T))} + \|\Delta U\|^2_{L^2(\partial B_R\times (0, T))} + \|\nabla\Delta U\|^2_{L^2(\partial B_R\times (0, T))}\big).
\end{align}
\end{theorem}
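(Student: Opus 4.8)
The plan is to establish \eqref{bc} by a global Carleman estimate for the principal plate operator $P:=\partial_t^2+\Delta^2$, treating the damping term $\sigma\partial_t U$ as a lower-order perturbation that is absorbed at the cost of the factor $e^{C\sigma^2}$, following the scheme of \cite[Theorem 3.1]{IL}. Working on the slab $B_R\times(0,T)$, I would fix a quadratic Carleman weight $\psi(x,t)=|x|^2-\theta t^2+c_0$ with $\theta\in(0,1)$ and $\varphi=e^{\tau\psi}$, the parameter $\tau$ to be taken large, and then choose $\theta,c_0$ so that the geometry matches the data: the window $4(2R+1)<T<5(2R+1)$ is precisely what forces the cross-section $\{t=0\}\cap B_R$ to sit in the region where $\psi$ is large while the final-time slice $\{t=T\}$ sits where $\psi$ is small, so that the end-time contributions are subdominant. (Equivalently, as in \cite{IL}, one may extend $U$ oddly to $t<0$; since the damping breaks exact time-reversal symmetry, the terms this creates are simply carried on the right-hand side alongside $-\sigma\partial_t U$ as lower-order sources.)

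The key steps, in order, are as follows. (1) Prove, or import from the plate-control literature, a Carleman estimate for $P$ with weight $\varphi$, of the schematic form
\[
\tau\!\int \varphi\big(|\partial_t U|^2 + |\nabla\Delta U|^2 + \cdots\big) + \tau^3\!\int\varphi|U|^2 \lesssim \int\varphi|PU|^2 + (\text{boundary terms on }\partial B_R\times(0,T)),
\]
where the boundary terms collect exactly the traces generated by integrating the fourth-order spatial operator by parts, namely $U,\nabla U,\Delta U,\nabla\Delta U$ together with their first time derivatives. (2) Insert $PU=-\sigma\partial_t U$ and absorb $\sigma^2\!\int\varphi|\partial_t U|^2$ into the left-hand side, which succeeds once $\tau\ge C\sigma^2$. (3) Introduce a cut-off $\chi(t)$ with $\chi\equiv1$ near $t=0$ and $\chi\equiv0$ near $t=T$, apply the estimate to $\chi U$, and observe that the commutator $[P,\chi]U$ is supported where $\chi'\neq0$, i.e. away from $t=0$, where $\varphi$ takes its smaller values. (4) Bound $\|f\|_{L^2(B_R)}^2=\|\partial_t U(\cdot,0)\|^2$ from below by the weighted interior integral over a neighbourhood of $\{t=0\}$, using the initial conditions $U(\cdot,0)=0,\ \partial_t U(\cdot,0)=f$ and an energy identity, and from above by the right-hand side of the Carleman estimate. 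Dividing through by the minimum of $\varphi$ on the central slab and by the large factor from $\tau$, and finally fixing $\tau\simeq\sigma^2$, collapses all the $\tau$- and $\varphi$-dependent constants into a single factor $e^{C\sigma^2}$, which delivers \eqref{bc}.

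The main obstacle I anticipate is the Carleman estimate itself for the fourth-order, non-hyperbolic operator $\partial_t^2+\Delta^2$, whose characteristic set $|\xi|^2=|\tau|$ is degenerate, together with honest bookkeeping of two quantities. First, the pseudoconvexity of $\psi$ with respect to $P$: the weight parameters $\theta,c_0$ and the time window $T$ must be tuned so that the sub-ellipticity (H\"ormander) condition holds on the whole slab, and verifying this for the plate operator is the technical heart of the argument. Second, the exact $\sigma$-dependence: every absorption of the damping and of the cut-off commutators must cost only a power of $\sigma$ inside an exponent proportional to $\tau$, so that the choice $\tau\simeq\sigma^2$ yields $e^{C\sigma^2}$ and nothing worse. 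A convenient device for organizing both the estimate and the boundary bookkeeping is the factorization $\partial_t^2+\Delta^2=(\Delta-{\rm i}\partial_t)(\Delta+{\rm i}\partial_t)$ into two Schr\"odinger-type factors, which also explains why exactly the traces $\Delta U$, $\nabla\Delta U$ and $\partial_t\Delta U$, rather than any higher ones, appear on the right-hand side of \eqref{bc}.
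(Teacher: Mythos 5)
Your high-level architecture matches the paper's: both rest on a Carleman estimate for the principal operator $\partial_t^2+\Delta^2$, absorb the damping term $\sigma\partial_t U$ into the left-hand side once $\tau\gtrsim\sigma^2$, and obtain the factor $e^{C\sigma^2}$ from the choice $\tau\simeq\sigma^2$; the paper, like you, follows \cite[Theorem 3.1]{IL}, and it settles what you call the "technical heart" by importing the Carleman estimate for the Kirchhoff plate operator from \cite{Yuan} rather than proving pseudoconvexity from scratch. The genuine gap is in your steps (3)--(4), i.e., in how $\|f\|^2_{L^2(B_R)}$ is actually extracted. You place the maximum of the weight at $t=0$ and keep $\chi\equiv1$ there, so $\chi U$ does not vanish near $t=0$; any Carleman estimate applied on $B_R\times(0,T)$ then produces boundary terms on the slice $\{t=0\}$, and since $\partial_t U(\cdot,0)=f$ these terms involve the unknown itself, weighted by the \emph{maximal} factor $e^{2\tau\psi(x,0)}$ --- the same size as the interior terms you keep on the left. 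They can neither be absorbed nor counted as data, so as written the argument is circular. (They disappear only under extra structure, e.g. after the odd extension you mention in passing, or by exploiting $U(\cdot,0)=0$ together with $\partial_t\psi(\cdot,0)=0$; neither is carried out.) Moreover, your step (4) is stated backwards --- you need $\|f\|^2$ bounded \emph{above} by controlled quantities --- and the "energy identity" you invoke does not exist for the damped equation: transferring energy backward in time costs a factor $e^{C\sigma(t_2-t_1)}$, and this $\sigma$-dependence must be tracked because it feeds into the final $e^{C\sigma^2}$.

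The paper fills exactly these holes with a different geometry. It centers the weight at $t=T/2$, taking $\varphi=|x-a|^2-\theta^2(t-T/2)^2$ with ${\rm dist}(a,\Omega)=1$ and $\theta=\frac12$, so that both time slices $\{t=0\}$ and $\{t=T\}$ carry exponentially smaller weight than the central slab $\Omega\times(T/2-\varepsilon_0,T/2+\varepsilon_0)$; the window $4(2R+1)<T<5(2R+1)$ is tuned to this weight, not to yours. It then proves two quantitative energy inequalities, \eqref{energy_1} and \eqref{energy_2}, by multiplying the equation by $(\partial_t U)e^{\theta t}$ with $\theta=0$ and $\theta=2\sigma$: a forward one, used to bound the weighted integrals over the whole cylinder by $E(0)+F^2$, and a backward one, costing $e^{2\sigma(t_2-t_1)}$, used to convert the Carleman lower bound on the central slab into a bound on $E(0)=\frac12\|f\|^2_{L^2(B_R)}$. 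If you insist on the $t=0$-centered weight, you must either develop the odd extension on $(-T,T)$ (the damping becomes $\sigma\,{\rm sgn}(t)\partial_t$, still a bounded perturbation) together with a Bukhgeim--Klibanov-type extraction lemma --- which for this fourth-order operator forces you to apply the Carleman estimate to $\partial_t U$ as well, since the available estimate controls only $\tau^6|U|^2+\tau^3|\partial_t U|^2+\tau|\Delta U|^2$ in the interior --- or replace your "energy identity" by the paper's forward/backward inequalities. Without one of these mechanisms the observability bound is not established.
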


Before showing the proof, we introduce the energies
\begin{align*}
E(t) &= \frac{1}{2}\int_{\Omega} \big(|\partial_t U(x, t)|^2 + |\Delta U(x, t)|^2 + |U(x, t)|^2\big) {\rm d}x,\\
E_0(t) &= \frac{1}{2}\int_{\Omega} \big(|\partial_t U(x, t)|^2 + |\Delta U(x, t)|^2 \big) {\rm d}x,
\end{align*}
and denote
\begin{align*}
F^2 &= \int_{\partial \Omega\times (t_1, t_2)}\big( |\partial^2_t U(x, t)|^2 + |\partial_t U(x, t)|^2 +  |\partial_t \nabla U(x, t)|^2 \\
&\quad + |\partial_t\Delta U(x, t)|^2 + |\Delta U(x, t)|^2 + |\nabla \Delta U(x, t)|^2 \big){\rm d}s(x){\rm d}t.
\end{align*}

\begin{lemma}
Let $U$ be a solution of the damped biharmonic plate wave equation \eqref{pe} with the initial value $f\in H^1(B_R)$, $supp f\subset B_R$. Let $0\leq t_1<t_2\leq T$ and $1\leq 2\sigma$. Then the following estimates holds: 
\begin{align}\label{energy_1}
E(t_2) &\leq e^{4(t_2 - t_1)^2}(2E(t_1) + F^2), \\
\label{energy_2}
E(t_2)&\leq e^{(2\sigma + 4(t_2 - t_1))(t_2 - t_1)}(E(t_2) + F^2).
\end{align}
\end{lemma}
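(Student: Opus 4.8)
The plan is to reduce both inequalities to a single differentiated energy identity and then to integrate it in two different ways. First I would work with $E_0$ rather than $E$, since the troublesome lower-order piece $\|U\|_{L^2(\Omega)}^2$ can be reinstated at the very end. Differentiating $E_0(t)$ and substituting $\partial_t^2U=-\Delta^2U-\sigma\partial_tU$ from \eqref{pe}, the term $\int_\Omega\partial_tU\,\Delta^2U\,\mathrm dx$ is integrated by parts twice via the biharmonic Green formula $\int_\Omega\partial_tU\,\Delta^2U\,\mathrm dx=\int_\Omega\partial_t\Delta U\,\Delta U\,\mathrm dx+\int_{\partial\Omega}(\partial_tU\,\partial_n\Delta U-\partial_n\partial_tU\,\Delta U)\,\mathrm ds$. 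The interior integral $\int_\Omega\partial_t\Delta U\,\Delta U\,\mathrm dx$ exactly cancels the term produced by $\tfrac{\mathrm d}{\mathrm dt}\tfrac12\|\Delta U\|_{L^2(\Omega)}^2$, leaving the clean identity $E_0'(t)=-\sigma\|\partial_tU\|_{L^2(\Omega)}^2-\int_{\partial\Omega}(\partial_tU\,\partial_n\Delta U-\partial_n\partial_tU\,\Delta U)\,\mathrm ds$. By Young's inequality the boundary integrand is dominated by $|\partial_tU|^2+|\nabla\Delta U|^2+|\partial_t\nabla U|^2+|\Delta U|^2$, and each of these four quantities is one of the six collected in the definition of $F^2$; hence the time integral of the boundary term over $(t_1,t_2)$ is bounded by $F^2$.

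For \eqref{energy_1} I would simply discard the dissipative term $-\sigma\|\partial_tU\|_{L^2(\Omega)}^2\le0$ and integrate the identity over $(t_1,t_2)$, which gives $E_0(t_2)\le E_0(t_1)+\tfrac12F^2$ and, in particular, $\|\partial_tU(t)\|_{L^2(\Omega)}^2\le2E_0(t)\le2E(t_1)+F^2$ for every $t\in[t_1,t_2]$. To pass from $E_0$ back to $E=E_0+\tfrac12\|U\|_{L^2(\Omega)}^2$ I would write $U(t_2)=U(t_1)+\int_{t_1}^{t_2}\partial_sU\,\mathrm ds$, so that $\|U(t_2)\|_{L^2(\Omega)}\le\|U(t_1)\|_{L^2(\Omega)}+(t_2-t_1)\sup_{[t_1,t_2]}\|\partial_sU\|_{L^2(\Omega)}$; squaring, and using $\tfrac12\|U(t_1)\|_{L^2(\Omega)}^2\le E(t_1)$ together with $E_0(t_1)\le E(t_1)$, collects everything into $E(t_2)\le\bigl(1+(t_2-t_1)^2\bigr)\bigl(2E(t_1)+F^2\bigr)$. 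The elementary bound $1+x\le e^x$ then yields the claimed factor $e^{4(t_2-t_1)^2}$ with room to spare. It is exactly this time integration of $\|\partial_tU\|_{L^2(\Omega)}$ that converts linear growth into the quadratic exponent $(t_2-t_1)^2$.

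For \eqref{energy_2} I would instead retain the damping. Using $\|\partial_tU\|_{L^2(\Omega)}^2\le2E_0(t)$ in the identity gives the one-sided inequality $E_0'(t)\ge-2\sigma E_0(t)-\tfrac12b(t)$, where $b(t)$ denotes the (nonnegative) boundary integrand; multiplying by the integrating factor $e^{2\sigma t}$ and integrating from $t_1$ to $t_2$ produces $E_0(t_1)\le e^{2\sigma(t_2-t_1)}\bigl(E_0(t_2)+\tfrac12F^2\bigr)$, i.e. the anti-dissipative (time-reversed) transport of energy that accounts for the extra factor $e^{2\sigma(t_2-t_1)}$. Reinstating $\tfrac12\|U\|_{L^2(\Omega)}^2$ exactly as in the forward case contributes the same $e^{4(t_2-t_1)^2}$ factor, and the standing hypothesis $1\le2\sigma$ lets me merge the two exponents into the single exponent $(2\sigma+4(t_2-t_1))(t_2-t_1)$ appearing in \eqref{energy_2} (read as a bound for $E(t_1)$ in terms of $E(t_2)$ and $F^2$).

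The main obstacle is the Green-formula step for the biharmonic operator: I must carry out the double integration by parts carefully and confirm that \emph{every} boundary term that appears is controlled by the six surface quantities in $F^2$, since a stray term not present there would break the estimate. Once that identity is in hand, the two inequalities differ only in bookkeeping—whether the dissipative term is dropped or absorbed through the $e^{2\sigma t}$ weight—together with the elementary but slightly delicate reconstruction of the $\|U\|_{L^2(\Omega)}^2$ part of the energy from $\int\|\partial_sU\|_{L^2(\Omega)}$, which is the genuine source of the quadratic-in-time exponents.
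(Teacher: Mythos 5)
Your strategy is essentially the paper's: both arguments rest on the energy identity for $E_0$ obtained by multiplying \eqref{pe} by $\partial_t U$ (the paper multiplies by $(\partial_t U)e^{\theta t}$ and takes $\theta=0$ and $\theta=2\sigma$, which is exactly your pair ``drop the damping'' / ``integrating factor $e^{2\sigma t}$''), on bounding the boundary flux $\partial_tU\,\partial_\nu\Delta U-\partial_\nu\partial_tU\,\Delta U$ by the quantities collected in $F^2$, and on recovering the $\|U\|^2_{L^2(\Omega)}$ part of $E$ from $U(t_2)=U(t_1)+\int_{t_1}^{t_2}\partial_sU\,{\rm d}s$. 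The paper outsources this last reconstruction step to \cite[Lemma 3.2]{IL}; you carry it out explicitly, and for \eqref{energy_1} your bookkeeping is correct: it yields $E(t_2)\le\bigl(1+(t_2-t_1)^2\bigr)\bigl(2E(t_1)+F^2\bigr)\le e^{4(t_2-t_1)^2}\bigl(2E(t_1)+F^2\bigr)$.

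The final ``merge'' in your treatment of \eqref{energy_2}, however, has a genuine gap. Write $\tau=t_2-t_1$. Doing the reconstruction ``exactly as in the forward case'' gives
\[
E(t_1)\;\le\;(1+2\tau^2)\,e^{2\sigma\tau}\Bigl(E_0(t_2)+\tfrac12F^2\Bigr)\;+\;\|U(t_2)\|_{L^2(\Omega)}^2 ,
\]
and the obstruction is that $\|U(t_2)\|^2_{L^2(\Omega)}$ enters $E(t_2)$ only as $\tfrac12\|U(t_2)\|^2_{L^2(\Omega)}$, so reassembling $E(t_2)$ forces the prefactor $\max\{(1+2\tau^2)e^{2\sigma\tau},\,2\}$. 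The inequality $2\le e^{(2\sigma+4\tau)\tau}$ fails whenever $(2\sigma+4\tau)\tau<\ln 2$ (for instance $\sigma=\tfrac12$ and $\tau$ small), and the hypothesis $1\le 2\sigma$ does not repair this. A weighted Young inequality does not help either: the cross term $2\tau\|U(t_2)\|\sup_s\|\partial_sU\|$ is of first order in $\tau$, whereas the claimed exponent $(2\sigma+4\tau)\tau$ allows only $2\sigma\tau$ at first order, and that allowance is already consumed by the backward $E_0$ estimate $E_0(t_1)\le e^{2\sigma\tau}(E_0(t_2)+\tfrac12 F^2)$, which is generically sharp. What your method (and \cite[Lemma 3.2]{IL}) actually proves is the backward estimate with the same factor $2$ as in \eqref{energy_1}, namely $E(t_1)\le e^{(2\sigma+4\tau)\tau}\bigl(2E(t_2)+F^2\bigr)$; a coefficient $1$ in front of $E(t_2)$ is not attainable this way for small $\tau$. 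This is the form you should state and prove — note that \eqref{energy_2} as printed is itself garbled, with $E(t_2)$ on both sides — and it is all that the proof of Theorem \ref{control_1} requires, since there the time separation is bounded below by $T/2-\varepsilon_0$ and the extra factor $2$ is absorbed into the generic constants.
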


\begin{proof}

Multiplying both sides of \eqref{pe} by $(\partial_t U) e^{\theta t}$ and integrating over $\Omega\times (t_1, t_2)$ give
\[
\int_{\Omega\times ((t_1, t_2)} \Big(\frac{1}{2}\partial_t (\partial_t U)^2 + \Delta^2U\partial_t U + \sigma(\partial_t U)^2\Big)e^{\theta t}\,{\rm d}x\,{\rm d}t = 0.
\]
Using the integration $\Delta^2U\partial_t U$ by parts over $\Omega$ and noting $\Delta U \partial_t(\Delta U) = \frac{1}{2}\partial_t|\Delta U|^2$, we obtain
\begin{align*}
&\int_{t_1}^{t_2} (\partial_t E_0(t))e^{\theta t}{\rm d}t + \int_{\Omega\times(t_1, t_2)} \sigma(\partial_t U)^2e^{\theta t}{\rm d}x{\rm d}t\\
&\quad + \int_{\partial\Omega\times(t_1, t_2)} (\partial_\nu(\Delta U)\partial_tU - \Delta U \partial_t(\partial_\nu U))e^{\theta t}{\rm d}s(x){\rm d}t = 0.
\end{align*}
Hence, 
\begin{align*}
E_0(t_2)e^{\theta t_2} - E_0(t_1)e^{\theta t_1} &= \int_{\Omega\times(t_1, t_2)} \Big(\frac{\theta}{2} ((\partial_t U)^2 + |\Delta U|^2) - \sigma(\partial_t U)^2\Big)e^{\theta t}{\rm d}x{\rm d}t\\
&\quad - \int_{\partial\Omega\times(t_1, t_2)} (\partial_\nu(\Delta U)\partial_tU - \Delta U \partial_t(\partial_\nu U))e^{\theta t}{\rm d}s(x){\rm d}t = 0.
\end{align*}
Letting $\theta = 0$, using Schwartz's inequality, and noting $\sigma>0$, we get
\begin{align*}
E_0(t_2)&\leq E_0(t_1) + \int_{\Omega\times(t_1, t_2)} (-\sigma)(\partial_t U)^2{\rm d}x{\rm d}t\\
&\quad + \frac{1}{2}\int_{\partial\Omega\times(t_1, t_2)} \Big((\partial_t U)^2 + (\partial_t(\partial_\nu U))^2\Big){\rm d}s(x){\rm d}t\\
&\quad + \frac{1}{2}\int_{\partial\Omega\times(t_1, t_2)} \Big((\Delta U)^2 + (\partial_\nu(\Delta U))^2\Big){\rm d}s(x){\rm d}t\\
&\leq E_0(t_1) + F^2.
\end{align*}
Similarly, letting $\theta = 2\sigma$, we derive 
\begin{align*}
E_0(t_1)e^{2\sigma t_1}&\leq E_0(t_2)e^{2\sigma t_2} + \int_{\Omega\times(t_2, t_1)}-\sigma (\Delta U)^2{\rm d}x{\rm d}t\\
&\quad + \frac{1}{2}\int_{\partial\Omega\times(t_1, t_2)} \Big((\partial_t U)^2 + (\partial_t(\partial_\nu U))^2\Big)e^{2\sigma t}{\rm d}s(x){\rm d}t\\
&\quad + \frac{1}{2}\int_{\partial\Omega\times(t_1, t_2)} \Big((\Delta U)^2 + (\partial_\nu(\Delta U))^2\Big)e^{2\sigma t}{\rm d}s(x){\rm d}t\\
&\leq E_0(t_2)e^{2\sigma t_2} + \frac{1}{2}\int_{\partial\Omega\times(t_1, t_2)} \Big((\partial_t U)^2 + (\partial_t(\partial_\nu U))^2\Big)e^{2\sigma t}{\rm d}s(x){\rm d}t\\
&\quad + \frac{1}{2}\int_{\partial\Omega\times(t_1, t_2)} \Big((\Delta U)^2 + (\partial_\nu(\Delta U))^2\Big)e^{2\sigma t}{\rm d}s(x){\rm d}t.
\end{align*}
which gives 
\begin{align*}
E_0(t_1) \leq e^{2\sigma(t_2 - t_1)} (E_0(t_2) + F^2).
\end{align*}
The proof is completed by following similar arguments as those in \cite[Lemma 3.2]{IL}. 
\end{proof}

Now we return to the proof of Theorem \ref{bc}

\begin{proof}[Proof of Theorem \ref{bc}]
Let $\varphi(x, t) = |x - a|^2 - \theta^2(t - \frac{T}{2})^2$, where ${\rm dist} (a, \Omega) = 1, \theta = \frac{1}{2}$. 
Using the Carleman-type estimate in \cite{Yuan}, we obtain 
\begin{align}\label{carleman}
&\tau^6 \int_Q |U|^2e^{2\tau\varphi}{\rm d}x{\rm d}t + 
\tau^3 \int_Q |\partial_tU|^2e^{2\tau\varphi}{\rm d}x{\rm d}t + \tau \int_Q |\Delta U|^2e^{2\tau\varphi}{\rm d}x{\rm d}t\notag\\
&\lesssim\int_Q ((\partial_t^2 + \Delta^2)U)^2e^{2\tau\varphi}{\rm d}x{\rm d}t\notag\\
&\quad + \int_{\partial Q}\tau^6(|\partial_\nu \Delta U|^2 +  |\partial_t \Delta U|^2 + |\partial_t^2 U)|^2)e^{2\tau\varphi}{\rm d}s(x){\rm d}t.
\end{align}
It is easy to see that $1 - \theta^2\varepsilon_0^2\leq\varphi$ on $\Omega\times \{|t - \frac{T}{2}|<\varepsilon_0\}$ for some positive $\varepsilon<1$. Then we have from \ref{energy_2} that 
\begin{align}\label{lhs}
&\tau^6 \int_Q |U|^2e^{2\tau\varphi}{\rm d}x{\rm d}t + 
\tau^3 \int_{Q} |\partial_tU|^2e^{2\tau\varphi}{\rm d}x{\rm d}t + \tau \int_Q |\Delta U|^2e^{2\tau\varphi}{\rm d}x{\rm d}t \notag\\
&\geq \tau^6 \int_{\Omega\times(\frac{T}{2} - \varepsilon_0, \frac{T}{2} + \varepsilon_0)} |U|^2e^{2\tau(1 - \theta^2\varepsilon_0^2)}{\rm d}x{\rm d}t + 
\tau^3 \int_{\Omega\times(\frac{T}{2} - \varepsilon_0, \frac{T}{2} + \varepsilon_0)} |\partial_tU|^2e^{2\tau(1 - \theta^2\varepsilon_0^2)}{\rm d}x{\rm d}t \notag\\
&\quad + \tau \int_{\Omega\times(\frac{T}{2} - \varepsilon_0, \frac{T}{2} + \varepsilon_0)} |\Delta U|^2e^{2\tau(1 - \theta^2\varepsilon_0^2)}{\rm d}x{\rm d}t \notag\\
&\geq \tau e^{2\tau(1 - \theta^2\varepsilon_0^2)} \int_{\Omega\times(\frac{T}{2} - \varepsilon_0, \frac{T}{2} + \varepsilon_0)} E(t) {\rm d}t \notag\\
&\geq \tau e^{2\tau(1 - \theta^2\varepsilon_0^2)}\varepsilon_0 (2e^{-(2\sigma + 4T)T} E(0) - F^2). 
\end{align}
Moreover, it follows from \eqref{energy_2} and $\varphi\leq (2R + 1)^2 - \theta^2T^2/4$ on $\Omega\times (0, T)$ that 
\begin{align*}
&\tau^6 \int_Q |U|^2e^{2\tau\varphi}{\rm d}x{\rm d}t + 
\tau^3 \int_Q |\partial_tU|^2e^{2\tau\varphi}{\rm d}x{\rm d}t + \tau \int_Q |\Delta U|^2e^{2\tau\varphi}{\rm d}x{\rm d}t\\
&\leq \tau^6 e^{2\tau((2R + 1)^2 - \theta^2T^2/4)}(E(0) + E(T))\\
&\leq \tau^6 e^{2\tau((2R + 1)^2 - \theta^2T^2/4)} ((e^{4T^2} + 1)E(0) + e^{4T^2}F^2). 
\end{align*}
By \eqref{carleman} and \eqref{lhs}, we obtain
\begin{align}\label{combine}
&\tau e^{2\tau(1 - \theta^2\varepsilon_0^2)}\varepsilon_0 e^{-(2\sigma + 1 + 4T)T} E(0)\notag\\
&\quad +\tau^6 \int_Q |U|^2e^{2\tau\varphi}{\rm d}x{\rm d}t + 
\tau^3 \int_Q |\partial_tU|^2e^{2\tau\varphi}{\rm d}x{\rm d}t + \tau \int_Q |\Delta U|^2e^{2\tau\varphi}{\rm d}x{\rm d}t\notag\\
&\leq \Big(\sigma^2\int_Q |\partial_t U|^2e^{2\tau\varphi}{\rm d}x{\rm d}t + \int_{\partial Q}\tau^6(|\partial_\nu \Delta U|^2 +  |\partial_t \Delta U|^2 + |\partial_t^2 U)|^2)e^{2\tau\varphi}{\rm d}s(x){\rm d}t\notag\\
&\quad + (\tau e^{2\tau(1 - \theta^2\varepsilon_0^2)} + \tau^6 e^{2\tau((2R + 1)^2 - \theta^2T^2/4)}e^{4T^2}) F^2 +  \tau^6 e^{2\tau((2R + 1)^2 - \theta^2T^2/4)}e^{4T^2} E(0)\Big).
\end{align}
Choosing $\tau$ sufficiently large, we may remove the first integral on the right hand side of \eqref{combine}. We also choose $T^2 = 4\frac{(2R+1)^2}{\theta^2} + 4\varepsilon_0^2$ and $\tau = (2\sigma + 8T)T + \ln(2(\varepsilon_0)^{-1}C) + C\sigma^2$. Noting $\tau^5e^{-\tau}\leq 5!$, we have
\begin{align*}
\tau^5 e^{2\tau((2R + 1)^2 - \theta^2T^2/4 - 1 + \theta^2\varepsilon_0^2) + (2\sigma + 8T)T} &= \tau^5 e^{-2\tau + (2\sigma + 8T)T}\\
&\leq 5! e^{-\tau + (2\sigma + 8T)T}\leq\frac{\varepsilon_0}{2C}.
\end{align*}
In addition, since $T\leq 5(2R + 1)$, it follows that 
\begin{align*}
\tau^5 e^{2\tau((2R + 1)^2 - 1 + \theta^2\varepsilon_0^2 + (2\sigma + 4T)T)} \leq \tau^5 e^{2((2\sigma + 8T)T + C\sigma^2 + C)(2R + 1)^2 + (2\sigma + 4T)T}\leq Ce^{C\sigma^2}. 
\end{align*}
 Using the above inequality and the inequality $\varphi<(2R + 1)^2$ on $Q$ and dividing both sides in \eqref{combine} by the factor of $E(0)$ on the left hand side, we obtain
\[
E(0)\leq Ce^{C\sigma^2} F^2.
\]
Since $f$ is supported in $\Omega$, there holds $\|U\|_{L^2(\partial B_R\times (0, T))}\leq C\|\partial_t U\|_{L^2(\partial B_R\times (0, T))}$, which completes the proof.
\end{proof}

\section{A decay estimate}\label{time decay}

We prove a decay estimate for the solution of the initial value problem of the damped plate wave equation
\begin{align}\label{Kirchhoff_2}
\begin{cases}
\partial_t^2 U(x, t) + \Delta^2 U(x, t) + \sigma\partial_t U(x, t) = 0, &\quad (x, t)\in \mathbb R^3\times (0, +\infty),\\
U(x, 0) = 0, \quad \partial_tU(x, 0) = f(x), &\quad x\in\mathbb R^3,
\end{cases}
\end{align}
where $f(x)\in L^1(\mathbb R^3)\cap H^s(\mathbb R^3)$. By the Fourier transform, the solution $U(x, t)$ of \eqref{Kirchhoff_2} is given as 
\begin{align*}
U(x, t) = \mathcal{F}^{-1} (m_\sigma (t, \xi) \hat{f}(\xi))(x),
\end{align*}
where $\mathcal{F}^{-1}$ denotes the inverse Fourier transform, 
\begin{align*}
m_\sigma(t, \xi) = \frac{e^{-\frac{\sigma}{2}t}}{\sqrt{\sigma^2 - 4|\xi|^4}} \Big(e^{\frac{1}{2}t \sqrt{\sigma^2 - 4|\xi|^4}} - e^{-\frac{1}{2}t \sqrt{\sigma^2 - 4|\xi|^4}}\Big), 
\end{align*}
and $\hat{f}(\xi)$ is the Fourier transform of $f$, i.e., 
\[
\hat{f}(\xi) = \frac{1}{(2\pi)^3} \int_{\mathbb R^3} e^{-{\rm i}x\cdot\xi} f(x) {\rm d}x. 
\]

Let $\sqrt{\sigma^2 - 4|\xi|^4} = {\rm i} \sqrt{4|\xi|^4 - \sigma^2}$ when $|\xi|^4>\frac{\sigma^2}{4}$. Then we have 
\begin{align*}
m_\sigma(t, \xi) = 
\begin{cases}
e^{-\frac{\sigma}{2}t} \frac{\sinh (\frac{t}{2}\sqrt{\sigma^2 - 4|\xi|^4 })}{\sqrt{\sigma^2 - 4|\xi|^4}},&\quad |\xi|^4<\frac{\sigma^2}{4},\\
e^{-\frac{\sigma}{2}t} \frac{\sin (\frac{t}{2}\sqrt{4|\xi|^4 - \sigma^2})}{\sqrt{4|\xi|^4 - \sigma^2}},&\quad |\xi|^4>\frac{\sigma^2}{4}.
\end{cases}
\end{align*}
It is clear to note from the representation of $m_\sigma(t, \xi)$ that the solution $U(x, t)$ depends on both of the low and high frequency of $\xi$. In fact, the solution $U(x, t)$ behaves as a ``parabolic type" of $e^{-t\Delta^2} f$ for the low frequency part, while for the high frequency part it behaves like a ``dispersive type" of $e^{{\rm i}t\Delta^2} f$.

\begin{theorem}\label{thm-decay}
Let $U(x, t)$ be the solution of \eqref{Kirchhoff_2}. Then $U(x, t)$ satisfies the decay estimate
\begin{align}\label{de}
{\rm sup}_{x\in\mathbb R^3} |\partial^\alpha_x\partial_t^j U(x, t)| \lesssim (1 + t)^{-\frac{3 + |\alpha|}{4}} \|f\|_{L^1(\mathbb R^3)} + e^{-ct} \|f\|_{H^s(\mathbb R^3)},
\end{align}
where $j\in\mathbb N$, $\alpha$ is a multi-index vector in $\mathbb N^3$ such that $\partial_\alpha = \partial_{x_1}^{\alpha_1}\,\partial_{x_2}^{\alpha_2}\,\partial_{x_3}^{\alpha_3}$, $s>2j + |\alpha| - \frac{1}{2}$ and $c>0$ is some positive constant. In particular, for $|\alpha| = s = 0$, the following estimate holds:
\begin{align}\label{FD}
{\rm sup}_{x\in\mathbb R^3} |U(x, t)|\lesssim (1 + t)^{-\frac{3}{4}} (\|f\|_{L^1(\mathbb R^3)} + \|f\|_{L^2(\mathbb R^3)}).
\end{align}
\end{theorem}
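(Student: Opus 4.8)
The plan is to work entirely on the Fourier side, writing $\partial_x^\alpha\partial_t^j U(x,t) = \mathcal{F}^{-1}\big(({\rm i}\xi)^\alpha \partial_t^j m_\sigma(t,\xi)\hat f(\xi)\big)(x)$ and using the elementary bound $\sup_x|\mathcal{F}^{-1}(g)| \le \|g\|_{L^1_\xi}$. It then suffices to estimate $\int_{\mathbb R^3}|\xi|^{|\alpha|}|\partial_t^j m_\sigma(t,\xi)|\,|\hat f(\xi)|\,{\rm d}\xi$, so the whole argument reduces to pointwise bounds on $\partial_t^j m_\sigma$ together with a splitting of $\xi$-space into a low-frequency region responsible for the polynomial (parabolic) rate and a high-frequency region responsible for the exponential tail. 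Throughout I use $|\hat f(\xi)|\le (2\pi)^{-3}\|f\|_{L^1}$ for the low part and a weighted Cauchy--Schwarz inequality for the high part.

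First I would fix a threshold $\delta_0>0$ with $\delta_0^4\le\sigma^2/8$ and treat the low region $|\xi|\le\delta_0$. There $\beta:=\sqrt{\sigma^2-4|\xi|^4}$ satisfies $\beta\ge\sigma/\sqrt2$, so from $m_\sigma=e^{-\sigma t/2}\sinh(\tfrac t2\beta)/\beta$ and $\sinh y\le\tfrac12 e^{y}$ I get $|m_\sigma|\lesssim_\sigma e^{-\frac t2(\sigma-\beta)}$; the elementary inequality $\sigma-\sqrt{\sigma^2-4|\xi|^4}\ge 2|\xi|^4/\sigma$ then yields the parabolic bound $|m_\sigma|\lesssim_\sigma e^{-t|\xi|^4/\sigma}$. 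Since each $\partial_t$ produces only bounded factors ($\sigma/2$, $\beta/2\le\sigma/2$, and $\cosh\leftrightarrow\sinh$), the same bound $|\partial_t^j m_\sigma|\lesssim_{\sigma,j}e^{-t|\xi|^4/\sigma}$ holds. Bounding $|\hat f|$ by $\|f\|_{L^1}$ and passing to polar coordinates, this contribution is controlled by $\|f\|_{L^1}\int_0^{\delta_0}r^{|\alpha|+2}e^{-tr^4/\sigma}\,{\rm d}r$; the rescaling $u=(t/\sigma)^{1/4}r$ turns this into $t^{-(|\alpha|+3)/4}$ times a convergent integral, which after absorbing the bounded range $t\le1$ gives the first term $(1+t)^{-(3+|\alpha|)/4}\|f\|_{L^1}$.

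For the high region $|\xi|\ge\delta_0$ I would separate the bounded intermediate shell $\delta_0\le|\xi|\le(\sigma/2)^{1/2}$ from the genuinely unbounded oscillatory region $|\xi|>(\sigma/2)^{1/2}$. On the shell $\sigma-\beta$ is bounded below by a fixed positive constant, so $m_\sigma$ and all its $t$-derivatives are $\lesssim e^{-ct}$ times factors integrable over this compact set (the apparent $1/\beta$ singularity at $|\xi|^4=\sigma^2/4$ being removable), and Cauchy--Schwarz over a set of finite measure bounds the contribution by $e^{-ct}\|f\|_{L^2}\le e^{-ct}\|f\|_{H^s}$. On the oscillatory region I write $m_\sigma=e^{-\sigma t/2}\sin(\tfrac t2\gamma)/\gamma$ with $\gamma=\sqrt{4|\xi|^4-\sigma^2}\sim2|\xi|^2$; since differentiating $\sin(\tfrac t2\gamma)/\gamma$ in $t$ produces at worst a factor $\gamma^{j-1}$, I obtain $|\partial_t^j m_\sigma|\lesssim_\sigma e^{-\sigma t/2}|\xi|^{2j-2}$. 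Splitting off a weight $|\xi|^{s}$ and applying Cauchy--Schwarz gives the bound $e^{-\sigma t/2}\big(\int_{|\xi|>(\sigma/2)^{1/2}}|\xi|^{2(|\alpha|+2j-2-s)}\,{\rm d}\xi\big)^{1/2}\|f\|_{H^s}$, and in $\mathbb R^3$ the radial integral $\int r^{2(|\alpha|+2j-2-s)+2}\,{\rm d}r$ converges precisely when $|\alpha|+2j-2-s<-\tfrac32$, i.e. $s>2j+|\alpha|-\tfrac12$. This produces the second term $e^{-ct}\|f\|_{H^s}$ with $c=\sigma/2$, shrunk if necessary to match the shell. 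Summing the three pieces gives \eqref{de}, and the special case \eqref{FD} follows by taking $|\alpha|=j=s=0$ and using $e^{-ct}\lesssim(1+t)^{-3/4}$.

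A remark on where the work lies: the conceptual content is packaged into two elementary but delicate points — the uniform control of $m_\sigma$ and its time derivatives across the transition $|\xi|^4=\sigma^2/4$, where the defining square root vanishes and the closed form looks singular even though $m_\sigma$ is in fact smooth (one must pass to the $\sinh y/y$, $\sin y/y$ normalizations to see this), and the bookkeeping of the powers of $|\xi|$ generated by $\partial_x^\alpha\partial_t^j$ that feed into the Cauchy--Schwarz step. The latter is exactly what pins down the Sobolev threshold $s>2j+|\alpha|-\tfrac12$, so I expect that establishing the derivative bounds on $m_\sigma$ cleanly enough to make the high-frequency integral converge is the real obstacle, whereas the low-frequency parabolic rate is routine once the bound $|m_\sigma|\lesssim e^{-t|\xi|^4/\sigma}$ is in hand.
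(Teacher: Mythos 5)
Your proposal is correct and takes essentially the same route as the paper: a low/high frequency splitting of the multiplier $m_\sigma$, with the parabolic bound $|m_\sigma|\lesssim e^{-t|\xi|^4/\sigma}$ paired with $\|f\|_{L^1}$ at low frequencies, exponential-in-$t$ bounds with polynomial growth in $|\xi|$ paired with $\|f\|_{H^s}$ at high frequencies, and the same bookkeeping of the factors $\gamma^{l-1}$ from $\partial_t^j$ that yields the identical threshold $s>2j+|\alpha|-\tfrac12$. The only cosmetic difference is that the paper reaches the sup-norm of the high-frequency piece via a smooth cutoff, Plancherel, and the Sobolev embedding $H^p\hookrightarrow L^\infty$ ($p>\tfrac32$), whereas you use the $L^1_\xi$ Fourier-inversion bound plus a weighted Cauchy--Schwarz directly --- the same estimate in disguise.
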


\begin{remark}\label{plancherel}
The estimate \eqref{FD} provides a time decay of the order $O((1 + t)^{-\frac{3}{4}})$ for $U(x, t)$ uniformly for all $x\in\mathbb R^3$, which gives 
\begin{align*}
\sup_{x\in\mathbb R^3} \int_0^\infty |U(x, t)|^2 {\rm d}t \lesssim \int_0^\infty (1 + t)^{-3/2}{\rm d}t < +\infty.
\end{align*}
Hence, let $U(x, t) = 0$ when $t<0$, then $U(x, t)$ has a Fourier transform $\hat{U}(x, k) \in L^2(\mathbb R)$ for each $x\in\mathbb R^3$. Moreover, the following Plancherel equality holds: 
\begin{align*}
\int_0^{+\infty} |U(x, t)|^2{\rm d}t = \int_{-\infty}^{+\infty} |\hat{U}(x, k)|^2 {\rm d}k.
\end{align*} 
\end{remark}

\begin{remark}\label{rd}
To study the inverse source problem, it suffices to assume that $f\in H^4(\mathbb R^3)$. In this case,  it follows from the above theorem that both $\partial_t^2U(x, t)$ and $\Delta^2 U(x, t)$ are continuous functions. Moreover, we have from \eqref{de} that the following estimate holds: 
\begin{align*}
{\rm sup}_{x\in\mathbb R^3} |\partial_t^j U(x, t)|& \lesssim (1 + t)^{-\frac{3}{4}} \|f\|_{L^1(\mathbb R^3)} + e^{-ct} \|f\|_{H^s(\mathbb R^3)},\quad j=1, 2,\\
{\rm sup}_{x\in\mathbb R^3} |\partial^\alpha_xU(x, t)|& \lesssim (1 + t)^{-\frac{3 + |\alpha|}{4}} \|f\|_{L^1(\mathbb R^3)} + e^{-ct} \|f\|_{H^s(\mathbb R^3)},\quad |\alpha|\leq 4.
\end{align*}
\end{remark}

\begin{proof}
Without loss of generality, we may assume that $\sigma = 1$, and then
\begin{align*}
m_\sigma(t, \xi) = \frac{e^{-\frac{1}{2}t}}{\sqrt{1 - 4|\xi|^4}} \Big(e^{\frac{1}{2}t \sqrt{1 - 4|\xi|^4}} - e^{-\frac{1}{2}t \sqrt{1 - 4|\xi|^4}}\Big).
\end{align*}

First we prove \eqref{de} for $j = 0$. Choose $\chi\in C_0^\infty(\mathbb R^3)$ such that ${\rm supp}\chi\subset B(0, \frac{1}{2})$ and $\chi(\xi) = 1$ for $|\xi|\leq\frac{1}{4}$. Let 
\begin{align*}
U(x, t) &= \mathcal{F}^{-1} (m(t, \xi)\chi(\xi)\hat{f})  +  \mathcal{F}^{-1}(m(t, \xi)(1 - \chi(\xi))\hat{f})\\
&:= U_1(x, t) + U_2(x, t).
\end{align*}
For $U_1(x, t)$, since $\sqrt{1 - 4|\xi|^4}\leq 1 - 2|\xi|^4$ when $0\leq |\xi| \leq \frac{1}{2}$, we have for $|\xi|\leq\frac{1}{2}$ that
\[
m(t, \xi) = \frac{1}{\sqrt{1 - 4|\xi|^4}} e^{-\frac{t}{2}(1 \pm \sqrt{1 - 4|\xi|^4})}\leq 2e^{-t|\xi|^4}, \quad t\geq 0.
\]
For each $x\in\mathbb R^3$ we have
\[
\partial^\alpha U_1(x, t) = \int_{\mathbb R^3} e^{{\rm i}x\cdot\xi} ({\rm i}\xi)^\alpha m(t, \xi) \chi(\xi) \hat{f}(\xi) {\rm d}\xi, 
\]
which gives 
\begin{align*}
\sup_{x\in\mathbb R^3} |\partial_x^\alpha U_1(x, t)| \leq \int_{|\xi|\leq\frac{1}{2}} |\xi|^\alpha e^{-t|\xi|^4} |\hat{f}(\xi)|{\rm d}\xi\lesssim \|\hat{f}\|_{L^\infty(\mathbb R^3)} \int_{|\xi|\leq\frac{1}{2}} |\xi|^\alpha e^{-t|\xi|^4}{\rm d}\xi.
\end{align*}
Since
\begin{align*}
\int_{|\xi|\leq\frac{1}{2}} |\xi|^\alpha e^{-t|\xi|^4}{\rm d}\xi \leq
\begin{cases}
C, &\quad 0\leq t \leq 1,\\
t^{-\frac{3 + |\alpha|}{4}}, &\quad t\geq 1,
\end{cases}
\end{align*}
and $\|\hat{f}\|_{L^\infty(\mathbb R^3)} \leq \|f\|_{L^1(\mathbb R^3)}$, we obtain 
\begin{align}\label{U1}
\sup_{x\in\mathbb R^3}|\partial_x^\alpha U_1(x, t)| \lesssim (1 + t)^{-\frac{3 + |\alpha|}{4}} |f\|_{L^1(\mathbb R^3)} \quad \forall \alpha\in\mathbb N^3.
\end{align}

To estimate $U_2(x, t)$, noting 
\[
(1 - \Delta)^{\frac{p}{2}} U_2(x, t) = \int_{\mathbb R^3} e^{{\rm i}x\cdot\xi} (1 + |\xi|^2)^{\frac{p}{2}} m(t, \xi) (1 - \chi(\xi)) \hat{f}(\xi) {\rm d}\xi, 
\]
we have from Plancherel's theorem that 
\begin{align}\label{PU2}
\int_{\mathbb R^3} |(1 - \Delta)^{\frac{p}{2}} U_2(x, t)|^2 {\rm d}x = \int_{\mathbb R^3} (1 + |\xi|^2)^p |m(t, \xi) (1 - \chi(\xi)) \hat{f}(\xi)|^2 {\rm d}\xi.
\end{align}
It holds that
\begin{align*}
|m(t, \xi)|\leq 
\begin{cases}
te^{-\frac{t}{2}(1 - \sqrt{1 - 4|\xi|^4})} \Big|\frac{1 - e^{-t\sqrt{1 - 4|\xi|^4}}}{t\sqrt{1 - 4|\xi|^4}}\Big|\lesssim e^{-\frac{t}{8}}, &\quad \frac{1}{2}<|\xi|\leq \frac{\sqrt{2}}{2},\\
\frac{1}{2}e^{-\frac{t}{2}} \frac{\sin \frac{t}{2}\sqrt{4|\xi|^4 - 1}}{\frac{t}{2}\sqrt{4|\xi|^4 - 1}}\lesssim e^{-\frac{t}{8}}, &\quad \frac{\sqrt{2}}{2}<|\xi|\leq 1,\\
\frac{e^{-\frac{t}{2}}}{\sqrt{4|\xi|^4 - 1}} |\sin \frac{t}{2}\sqrt{4|\xi|^4 - 1}|\leq \frac{e^{-\frac{t}{2}}}{\sqrt{4|\xi|^4 - 1}}, &\quad |\xi|>1.
\end{cases}
\end{align*}
Hence, when $|\xi|\geq\frac{1}{2}$ we have 
\[
|(1 + |\xi|^2)m(t, \xi)| \lesssim e^{-\frac{t}{8}}.
\]
It follows from \eqref{PU2}  that
\begin{align*}
\|U_2(x, t)\|^2_{H^p(\mathbb R^3)} &\leq \int_{|\xi|\geq \frac{1}{2}} | (1 + |\xi|^2)^{\frac{p}{2}} m(t, \xi) \hat{f}(\xi) |^2 {\rm d}\xi\\
&\leq e^{-\frac{t}{4}} \int_{\mathbb R^3} | (1 + |\xi|^2)^{-1 + \frac{p}{2}} \hat{f}(\xi) |^2 {\rm d}\xi = e^{-\frac{t}{4}} \|f\|^2_{H^{p-2}(\mathbb R^3)}.
\end{align*}
On the other hand, by Sobolev's theorem, we have for $p>\frac{3}{2}$ that 
\[
\sup_{x\in\mathbb R^3} |U_2(x, t)| \leq \|U_2(\cdot, t)\|_{H^{p}(\mathbb R^3)} \lesssim e^{-\frac{t}{8}}\|f\|_{H^{p-2}(\mathbb R^3)}.
\]
More generally, for any $\alpha\in\mathbb N^3$ it holds that
\[
(1 - \Delta)^{\frac{p}{2}}\partial_x^\alpha U_2(x, t) = \mathcal{F}^{-1} ((1 + |\xi|^2)^{\frac{p}{2}} m(t, \xi) (1 - \chi(\xi))\widehat{\partial^\alpha f}),
\]
which leads to 
\begin{align}\label{U2}
\sup_{x\in\mathbb R^3} |\partial_x^\alpha U_2(x, t)| \lesssim e^{-\frac{t}{8}} \|\partial^\alpha f\|_{H^{p-2}(\mathbb R^3)}\lesssim e^{-\frac{t}{8}} \|f\|_{H^{s}(\mathbb R^3)}.
\end{align}
Here $s = p - 2 + |\alpha|>|\alpha| - \frac{1}{2}$ by choosing $p > \frac{3}{2}$. Combining the estimate \eqref{U1} with \eqref{U2} yields \eqref{de} for $j = 0.$

Next we consider the general case with $j\neq 0$. Noting  
\[
\partial_t^j U(x, t) = \int_{\mathbb R^3} e^{{\rm i}x\cdot\xi} \partial_t^j m(t, \xi) \hat{f}(\xi) {\rm d}\xi,
\]
we obtain from direct calculations that 
\begin{align*}
\partial_t^j m(t, \xi) &= \partial_j \Big(\frac{e^{-\frac{1}{2}t}}{\sqrt{1 - 4|\xi|^4}} \Big(e^{\frac{1}{2}t \sqrt{1 - 4|\xi|^4}} - e^{-\frac{1}{2}t \sqrt{1 - 4|\xi|^4}}\Big)\Big)\\
&= \sum_{l=0}^j 2^{-j} (\sqrt{1 - 4|\xi|^4})^{l-1} e^{-\frac{t}{2}} \Big(e^{\frac{1}{2}t \sqrt{1 - 4|\xi|^4}} + (-1)^{l+1}e^{-\frac{1}{2}t \sqrt{1 - 4|\xi|^4}}\Big)\\
&:= \sum_{l=0}^j m_l(t, \xi).
\end{align*}
Hence we can write $\partial_t^j U(x, t)$ as 
\begin{align}\label{sw}
\partial_t^j U(x, t) = \sum_{l=0}^j \int_{\mathbb R^3} e^{{\rm i}x\cdot\xi} m_l(t, \xi) \hat{f}(\xi){\rm d}\xi:= \sum_{l=0}^j W_l(x, t).
\end{align}
For each $0\leq l \leq j, \, j\neq 0$, using similar arguments for the case $j = 0$ we obtain 
\begin{align}\label{w}
\sup_{x\in\mathbb R^3} |\partial_x^\alpha W_l(x, t)| \leq (1 + t)^{-\frac{3 + |\alpha|}{4}}\|f\|_{L^1(\mathbb R^3)} + e^{-\frac{t}{8}}\|f\|_{H^{s}(\mathbb R^3)}
\end{align}
for $s>2l + |\alpha| - \frac{1}{2}$. Combining \eqref{sw} and \eqref{w}, we obtain the general estimate \eqref{de}.
\end{proof}

\begin{remark}
For the damped biharmonic plate wave equation, besides the decay estimate \eqref{de}, we can deduce other decay estimates of the $L^p$-$L^q$ type and time-space estimates by more sophisticated analysis for the Fourier multiplier $m(t, \xi)$. For example, it can be proved that 
\[
\|U(x, t)\|_{L^q(\mathbb R^3)} \lesssim (1 + t)^{-\frac{3}{4}(\frac{1}{p} - \frac{1}{q})}\|f\|_{L^p(\mathbb R^3)} + e^{-ct}\|f\|_{W^{q, s}(\mathbb R^3)},
\]
where $1<p\leq q<+\infty$ and $s\geq 3 (\frac{1}{q} - \frac{1}{2}) - 2$. We hope to present the proofs of these $L^p$-$L^q$ estimates and their applications elsewhere. 
\end{remark}

\end{appendix}

\section*{Acknowledgement}

We would like to thank Prof. Masahiro Yamamoto for providing the reference \cite{Yuan} on Carleman estimates of the Kirchhoff plate equation. The research of PL is supported in part by the NSF grant DMS-1912704. The research of XY is supported in part by NSFC (No. 11771165). The research of YZ is supported in part by NSFC (No. 12001222).

\end{document}